\numberwithin{equation}{section}
\declaretheorem{theorem}
\declaretheorem[sibling=theorem]{remark}
\declaretheorem[sibling=theorem]{proposition}
\DeclarePairedDelimiter{\norm}{\lVert}{\rVert}
\DeclarePairedDelimiter{\paren}{\lparen}{\rparen}
\newcommand{\mbf}[1]{\mathbf{#1}}
\newcommand{\bem}{\begin{bmatrix}}\newcommand{\eem}{\end{bmatrix}}
\DeclareMathOperator*{\diag}{\mathrm{diag}}
\DeclareMathOperator*{\range}{\mathrm{range}}
\DeclareMathOperator*{\rank}{\mathrm{rank}}
\DeclareMathOperator*{\nul}{\mathrm{null}}
\title{TriCG with deflated restarting for symmetric quasi-definite linear systems}
\author{Kui Du\thanks{School of Mathematical Sciences, Xiamen University, Xiamen
    361005, China (kuidu@xmu.edu.cn).}, \quad Jia-Jun Fan\thanks{School of
        Mathematical Sciences, Xiamen University, Xiamen 361005, China
    (jiajunfan@stu.xmu.edu.cn).}}
\date{}
\begin{document}

\maketitle

\begin{abstract}
    TriCG is a short-recurrence iterative method recently introduced by Montoison
    and Orban [SIAM J.\ Sci.\ Comput., 43 (2021), pp.\ A2502--A2525] for solving
    symmetric quasi-definite (SQD) linear systems. TriCG takes advantage of the
    inherent block structure of SQD linear systems and performs substantially
    better than SYMMLQ. 
    However, numerical experiments have revealed that the convergence of TriCG can be notably slow when the off-diagonal block contains a substantial number of large elliptic singular values.
    To address this limitation, we introduce a deflation strategy tailored for TriCG to improve its convergence behavior. Specifically, we develop a generalized Saunders--Simon--Yip process with deflated restarting to construct the deflation subspaces. Building upon this process, we propose a novel method termed TriCG with deflated restarting. The deflation subspaces can also be utilized to solve SQD linear systems with multiple right-hand sides. Numerical experiments are provided to illustrate the superior performance of the proposed methods.

    \paragraph{Keywords.} Symmetric quasi-definite linear systems, generalized
    Saunders--Simon--Yip process, TriCG, deflated restarting

    \paragraph{2020 Mathematics Subject Classification.} 65F10, 15A06, 15A18
\end{abstract}

\section{Introduction}

We consider linear systems of the form
\begin{equation} \label{eq:problem}
    \begin{bmatrix}
        \mathbf{M} & \mathbf{A} \\ 
        \mathbf{A}^\top & -\mathbf{N}
    \end{bmatrix}
    \begin{bmatrix}
        \mathbf{x} \\ 
        \mathbf{y}
    \end{bmatrix} =
    \begin{bmatrix}
        \mathbf{b} \\ \mathbf{c}
    \end{bmatrix}, 
\end{equation}
where $\mathbf{M} \in \mathbb{R}^{m \times m}$ and $\mathbf{N} \in \mathbb{R}^{n
\times n}$ are symmetric positive definite (SPD), $\mathbf{b} \in \mathbb{R}^m$
and $\mathbf{c} \in \mathbb{R}^n$ are nonzero, and $\mathbf{A} \in \mathbb{R}^{m
\times n}$ is an arbitrary nonzero matrix. The coefficient matrix of
\eqref{eq:problem} is called symmetric quasi-definite (SQD)
\cite{orban2017iterative}. SQD linear systems arise in a variety of
applications, for example, computational fluid dynamics
\cite{elman2002preconditioners,elman2014finite}, and optimization problems
\cite{friedlander2012primal}.

SQD matrices are symmetric, indefinite, and nonsingular. Krylov subspace
methods, such as MINRES and SYMMLQ \cite{paige1975solutions}, can be employed to
solve \eqref{eq:problem}. It should be noted that these methods solve the system
as a whole and often exploit the block structure in the preconditioning stage.

Recently, several iterative methods that are specifically
tailored to exploit the block structure of \eqref{eq:problem} have been developed. Based on the
generalized Saunders--Simon--Yip (gSSY) tridiagonalization process
\cite{saunders1988two, buttari2019tridiagonalization}, Montoison and Orban
\cite{montoison2021tricg} proposed two short-recurrence methods called TriCG and
TriMR for solving \eqref{eq:problem}. TriCG and TriMR are mathematically
equivalent to preconditioned Block-CG and Block-MINRES with two right-hand sides, 
in which the two approximate solutions are summed at each iteration. But the
storage and work per iteration of TriCG and TriMR are similar to those of CG
\cite{hestenes1952methods} and MINRES \cite{paige1975solutions}, respectively.
Numerical experiments in \cite{montoison2021tricg} show that TriCG and TriMR
appear to preserve orthogonality in the basis vectors better than preconditioned
Block-CG and Block-MINRES, and terminate earlier than SYMMLQ and MINRES. Du,
Fan, and Zhang \cite{du2025improved} recently proposed improved versions of
TriCG and TriMR that avoid unlucky terminations. They also demonstrated that
the maximum number of iterations at which the gSSY tridiagonalization process terminates is
determined by the rank of $\mathbf{A}$ and the number of distinct elliptic
singular values of $\mathbf{A}$. In addition to iterative methods specifically tailored for SQD linear systems, there are also specially designed iterative methods that exploit the block structure of saddle-point linear systems or block two-by-two nonsymmetric linear systems; see, for example,
\cite{buttari2019tridiagonalization,estrin2018spmr,montoison2023gpmr,orban2017iterative,benzi2005numer,rozloznik2018saddl}.
 
When solving linear systems, deflation refers to mitigating the influence of
specific eigenvalues that tend to slow down the convergence of iterative
methods. Deflation can be implemented by augmenting a subspace with approximate eigenvectors, or by constructing a preconditioner based on eigenvectors. 
Deflation techniques integrated with CG-type methods have been widely developed.
For example, Saad et al.\ \cite{saad2000deflated} proposed a deflated version of
CG by adding some vectors into the Krylov subspace of CG. Dumitrasc, Kruse, and
R\"ude \cite{dumitrasc2024deflation} developed a deflation strategy by deflating
the off-diagonal block in symmetric saddle point systems and applied it with
Craig's method \cite{craig1955n}. For more developments related to deflation we
refer the reader to
\cite{abdelrehim2010deflated,gutknecht2012spectral,soodhalter2020survey,daas2021recycling,du2025deflated}
and the references therein.

Numerical experiments demonstrate that TriCG often exhibits slow convergence
when $\mbf A$ in \eqref{eq:problem} has a substantial number of large elliptic singular values.
To reduce the influence of large elliptic singular values, we can deflate
\eqref{eq:problem} by using corresponding elliptic singular vectors. We show
that the deflated system can still be solved by TriCG. Since the desired
elliptic singular vectors are usually not available in practice, we develop a
gSSY process with deflated restarting to compute their approximations. Combining
this process with TriCG, we propose a new method called TriCG with deflated
restarting (TriCG-DR). The TriCG-DR method is closely related to the methods in
\cite{morgan2002gmres,baglama2005augmented,abdelrehim2010deflated,baglama2013augmented,dumitrasc2024deflation,du2025deflated}.
We also explore solving SQD linear systems with multiple right-hand sides. When TriCG-DR is applied to the system with the first right-hand side, the elliptic singular vector information obtained can be used to improve the convergence of systems with other right-hand sides. We propose a method called deflated TriCG (D-TriCG) to implement this approach effectively. Specifically, we solve the system with the first right-hand side using TriCG-DR, then project subsequent systems using the obtained approximate elliptic singular vectors before applying TriCG. 

This paper is organized as follows. In the remainder of this section, we
introduce some notation. In \cref{sec:ssy}, we review the gSSY
tridiagonalization process and TriCG. In \cref{sec:deflation}, we introduce the
deflated system and present its connection to \eqref{eq:problem}. In
\cref{sec:gssy_dr}, the gSSY process with deflated restarting for
computing several desired elliptic singular values and vectors is proposed. In
\cref{sec:tricgdr}, we introduce TriCG-DR and present its detailed
implementations. In \cref{sec:mrhs}, we introduce D-TriCG for solving SQD linear systems with multiple right-hand sides. Numerical
experiments and concluding remarks are given in
\cref{sec:experiment,sec:conclusion}, respectively.

\emph{Notation}. We use uppercase bold letters to denote matrices, and lowercase
bold letters to denote column vectors unless otherwise specified. We use
$\mathbf{I}_k$ to denote the identity of size $k \times k$. The zero vector or
matrix is denoted by $\mathbf{0}$. The vector $\mathbf{e}_k$ denotes the $k$th
column of the identity matrix $\mathbf{I}$ whose size is clear from the context.
For a vector $\mathbf{v}$, $\mathbf{v}^\top$ and $\|\mathbf{v}\|$ denote its
transpose and 2-norm, respectively. For an SPD matrix $\mathbf{M}$, the unique
SPD square root matrix of $\mbf M$ is denoted by $\mbf M^{\frac{1}{2}}$, and the
$\mathbf{M}$-norm of a vector $\mathbf{v}$ is defined as $\norm{\mathbf{v}}_{\mathbf{M}} =
\sqrt{\mathbf{v}^\top \mathbf{M} \mathbf{v}}$. For a matrix $\mathbf{A}$, its
transpose, inverse, range, and null space are denoted by $\mathbf{A}^\top$,
$\mathbf{A}^{-1}$, $\range(\mathbf{A})$, and $\nul(\mathbf{A})$, respectively.
The normalization of the form ``$\beta \mathbf{Mu} = \mathbf{b}$'' is short for
``$\widetilde{\mathbf{u}} = \mathbf{M}^{-1} \mathbf{b}$; $\beta =
\sqrt{\widetilde{\mathbf{u}}^\top \mathbf{b}}$; if $\beta=0$, then stop, else
$\mathbf{u} = \widetilde{\mathbf{u}} / \beta$.''

\section{The gSSY tridiagonalization process and TriCG} 
\label{sec:ssy}

We first review the gSSY tridiagonalization process. For a general matrix
$\mathbf{A} \in \mathbb{R}^{m \times n}$, SPD matrices $\mathbf{M} \in
\mathbb{R}^{m \times m}$ and $\mathbf{N} \in \mathbb{R}^{n \times n}$, and
nonzero initial vectors $\mathbf{b} \in \mathbb{R}^m$ and $\mathbf{c} \in
\mathbb{R}^n$, we describe the gSSY tridiagonalization process in
\Cref{alg:orth_tri}. 

\begin{algorithm}[htbp]
    \caption{Generalized Saunders--Simon--Yip tridiagonalization process}
    \label{alg:orth_tri}

    \KwIn{SPD matrices $\mathbf{M} \in \mathbb{R}^{m\times m}$ and $\mathbf{N}
        \in \mathbb{R}^{n\times n}$, a general matrix $\mathbf{A} \in
        \mathbb{R}^{m\times n}$, nonzero vectors $\mathbf{b}\in\mathbb{R}^m$ and
        $\mathbf{c} \in \mathbb{R}^n$
    }

    $\mathbf{u}_0 = \mathbf{0}$, $\mathbf{v}_0 = \mathbf{0}$\; $\beta_1
    \mathbf{M} \mathbf{u}_1 = \mathbf{b}$, $\gamma_1 \mathbf{N} \mathbf{v}_1 =
    \mathbf{c}$\;

    \For{$j = 1, 2, \dots$}
    {
        $\mathbf{q} = \mathbf{A} \mathbf{v}_j - \gamma_j \mathbf{M}
        \mathbf{u}_{j-1}$\; 

        $\mathbf{p} = \mathbf{A}^\top \mathbf{u}_j - \beta_j \mathbf{N}
        \mathbf{v}_{j-1}$\; 
        $\alpha_j = \mathbf{u}_j^\top \mathbf{q}$\; 

        $\beta_{j+1} \mathbf{M} \mathbf{u}_{j+1} = \mathbf{q} - \alpha_j
        \mathbf{M} \mathbf{u}_j$ \label{alg:update_u} \;

        $\gamma_{j+1} \mathbf{N} \mathbf{v}_{j+1} =
        \mathbf{p} - \alpha_j \mathbf{N} \mathbf{v}_j$ \label{alg:update_v} \;
    }
\end{algorithm}

After $j$ iterations of \Cref{alg:orth_tri}, the following relations hold:
\begin{subequations} \label{relation_of_orth_tri}
    \begin{align}
        \mathbf{A} \mathbf{V}_j &= \mathbf{M} \mathbf{U}_j \mathbf{T}_j +
        \beta_{j+1} \mathbf{M} \mathbf{u}_{j+1} \mathbf{e}_{j}^\top =
        \mathbf{M} \mathbf{U}_{j+1} \mathbf{T}_{j+1, j}, \\
        \mathbf{A}^\top \mathbf{U}_j &= \mathbf{N} \mathbf{V}_j
        \mathbf{T}_j^\top + \gamma_{j+1} \mathbf{N} \mathbf{v}_{j+1}
        \mathbf{e}_{j}^\top = \mathbf{N} \mathbf{V}_{j+1} \mathbf{T}_{j,j+1}^\top, \\
        \mathbf{U}_j^\top \mathbf{M} \mathbf{U}_j &= \mathbf{V}_j^\top
        \mathbf{N} \mathbf{V}_j = \mathbf{I}_j,\quad 
        \mathbf{U}_j^\top \mathbf{AV}_j = \mathbf{T}_j,
    \end{align} 
\end{subequations}
where  \begin{equation*}
    \mathbf{V}_j =
    \begin{bmatrix}
        \mathbf{v}_1 & \mathbf{v}_2 & \cdots & \mathbf{v}_j
    \end{bmatrix}, \quad
    \mathbf{U}_j =
    \begin{bmatrix}
        \mathbf{u}_1 & \mathbf{u}_2 & \cdots & \mathbf{u}_j
    \end{bmatrix}, 
\end{equation*} and
\begin{equation*}
    \mathbf{T}_j = 
    \begin{bmatrix}
        \alpha_1 & \gamma_2 \\
        \beta_2  & \alpha_2 & \ddots \\
                 & \ddots   & \ddots & \gamma_j \\
                 &			& \beta_j & \alpha_j
    \end{bmatrix}, \quad
    \mathbf{T}_{j,j+1} = 
    \begin{bmatrix}
        \mathbf{T}_j & \gamma_{j+1}\mathbf{e}_j
    \end{bmatrix}, \quad
    \mathbf{T}_{j+1,j} = 
    \begin{bmatrix}
        \mathbf{T}_j \\ \beta_{j+1}\mathbf{e}_j^\top
    \end{bmatrix}.
\end{equation*}

We next review TriCG proposed by Montoison and Orban \cite{montoison2021tricg}.
Utilizing the relations in \eqref{relation_of_orth_tri}, we have
\begin{equation} \label{relation_block}
    \begin{bmatrix}
        \mathbf{M} & \mathbf{A} \\ \mathbf{A}^\top & -\mathbf{N}
    \end{bmatrix}
    \begin{bmatrix}
        \mathbf{U}_j & \\ & \mathbf{V}_j
    \end{bmatrix} = 
    \begin{bmatrix}
        \mathbf{M} & \\ & \mathbf{N}
    \end{bmatrix}
    \begin{bmatrix}
        \mathbf{U}_{j+1} & \\ & \mathbf{V}_{j+1}
    \end{bmatrix}
    \begin{bmatrix}
        \mathbf{I}_{j+1,j} & \mathbf{T}_{j+1,j} \\ \mathbf{T}_{j,j+1}^\top &
        -\mathbf{I}_{j+1, j}
    \end{bmatrix}, 
\end{equation}
where $\mathbf{I}_{j+1,j}$ is the matrix consisting of the first $j$ columns of
$\mathbf{I}_{j+1}$. Let
\begin{equation}\label{relation_block1}
    \mathbf{K} := 
    \begin{bmatrix}
        \mathbf{M} & \mathbf{A} \\
        \mathbf{A}^\top & -\mathbf{N}
    \end{bmatrix}, \quad
    \mathbf{H} :=
    \begin{bmatrix}
        \mathbf{M} & \\
                   & \mathbf{N}
    \end{bmatrix},
\end{equation}
and
\[
    \mathbf{P}_j :=
    \begin{bmatrix}
        \mathbf{e}_1 & \mathbf{e}_{j+1} & \cdots & \mathbf{e}_i &
        \mathbf{e}_{j+i} & \cdots & \mathbf{e}_{j} & \mathbf{e}_{2j} 
    \end{bmatrix} \in \mathbb{R}^{2j \times 2j}
\]
be the permutation matrix introduced by Paige \cite{paige1974bidiagonalization}.
Let
\begin{equation} \label{definition_W}
    \mathbf{W}_j :=
    \begin{bmatrix}
        \mathbf{U}_j & \\ & \mathbf{V}_j
    \end{bmatrix} \mathbf{P}_j. 
\end{equation}
Combining \eqref{relation_block}, \eqref{relation_block1},  and \eqref{definition_W} yields
\[
    \mathbf{K} \mathbf{W}_j = \mathbf{H} \mathbf{W}_{j+1} \mathbf{P}_{j+1}^\top
    \begin{bmatrix}
        \mathbf{I}_{j+1,j} & \mathbf{T}_{j+1,j} \\ 
        \mathbf{T}_{j,j+1}^\top & -\mathbf{I}_{j+1, j}
    \end{bmatrix}\mathbf{P}_j =: \mathbf{H} \mathbf{W}_{j+1} \mathbf{S}_{j+1,j},
\]
where
\[
    \mathbf{S}_{j+1,j} =
    \begin{bmatrix}
        \bm{\Omega}_1 & \bm{\Psi}_2 \\
        \bm{\Psi}_2^\top & \bm{\Omega}_2 & \ddots \\
                         & \ddots     & \ddots & \bm{\Psi}_j \\
                         &			 & \ddots & \bm{\Omega}_j \\
                         &            &        & \bm{\Psi}_{j+1}^\top
    \end{bmatrix}\in\mathbb{R}^{(2j+2) \times 2j}, \quad
    \bm{\Omega}_j = 
    \begin{bmatrix}
        1 & \alpha_j \\ \alpha_j & -1
    \end{bmatrix}, \quad
    \bm{\Psi}_j = 
    \begin{bmatrix}
        0 & \gamma_j \\ \beta_j & 0
    \end{bmatrix}.
\]
Let $\mathbf{S}_j$ denote the leading $2j \times 2j$ submatrix of
$\mathbf{S}_{j+1,j}$. At step $j$, TriCG solves the subproblem
\[
    \mathbf{S}_j \mathbf{z}_j = \beta_1 \mathbf{e}_1 + \gamma_1 \mathbf{e}_2, \quad
    \mathbf{z}_j := 
    \begin{bmatrix}
        \xi_1 & \xi_2 & \cdots & \xi_{2j}
    \end{bmatrix}^\top \in \mathbb{R}^{2j},
\]
and generates the $j$th iterate
\[
    \begin{bmatrix}
        \mathbf{x}_j \\ \mathbf{y}_j
    \end{bmatrix} =
    \mathbf{W}_j \mathbf{z}_j,
\]
which satisfies the Galerkin condition 
\[
    \mathbf{W}_j^\top \mathbf{r}_j = \mathbf{W}_j^\top  
    \biggl(
        \begin{bmatrix} \mathbf{b} \\ \mathbf{c}
            \end{bmatrix} - \begin{bmatrix} \mathbf{M} & \mathbf{A} \\
            \mathbf{A}^\top & -\mathbf{N} \end{bmatrix} \begin{bmatrix} \mathbf{x}_j \\
        \mathbf{y}_j \end{bmatrix}
    \biggr) = \mathbf{0}.
\]
The corresponding residual is (see \cite[(3.13)]{montoison2021tricg})
\begin{equation} \label{eq:rj_tricg}
    \mathbf{r}_j = -\mathbf{H}
    \begin{bmatrix}
        \mathbf{u}_j+1 & \mathbf{0} \\
        \mathbf{0} & \mathbf{v}_{j+1}
    \end{bmatrix}
    \begin{bmatrix}
        \beta_{j+1} \xi_{2j} \\
        \gamma_{j+1} \xi_{2j-1}.
    \end{bmatrix}
\end{equation}

The $\mathrm{LDL}^\top$ factorization $\mathbf{S}_j = \mathbf{L}_j \mathbf{D}_j
\mathbf{L}_j^\top$ with 
\[
    \mathbf{D}_j = \begin{bmatrix} 
                       d_1 & & \\ & \ddots & \\ & & d_{2 j}
                   \end{bmatrix}, \quad
    \mathbf{L}_j = \begin{bmatrix} 
                       \bm{\Delta}_1 & & & \\
                       \bm{\Gamma}_2 & \bm{\Delta}_2 & & \\ 
                                     & \ddots & \ddots & \\ 
                                     & & \bm{\Gamma}_j & \bm{\Delta}_j 
                   \end{bmatrix}, \quad
    \bm{\Delta}_j = \begin{bmatrix} 
                        1 & \\
                        \delta_j & 1 
                    \end{bmatrix}, \quad
    \bm{\Gamma}_j = \begin{bmatrix} 
                        & \sigma_j \\ 
                        \eta_j & \lambda_j 
                    \end{bmatrix}
\]
exists, and can be obtained via the following recurrences
\begin{subequations} \label{eq:ldl}
    \begin{alignat}{2}
        d_{2j-1} & = 1-\sigma_j^2 d_{2j-2}, &\qquad & j \geq 1, \label{eq:ldl1}\\
        \delta_j & =\left(\alpha_j-\lambda_j \beta_j \right) / d_{2j-1}, & & j \geq 1, \\        d_{2j} & = -1-\eta_j^2 d_{2j-3}-\lambda_j^2 d_{2j-2}-\delta_j^2 d_{2j-1}, & & j \geq 1, \\
        \sigma_j & =\beta_j / d_{2j-2}, & & j \geq 2, \\
        \eta_j & =\gamma_j / d_{2j-3}, & & j \geq 2, \\
        \lambda_j & =-\gamma_j \delta_{j-1} / d_{2j-2}, & & j \geq 2,
        \label{eq:ldl2}
    \end{alignat}
\end{subequations} with $d_{-1}=d_0=\sigma_1=\eta_1=\lambda_1=0$.
By utilizing the $\mathrm{LDL}^\top$ factorization and the strategy of Paige and
Saunders \cite{paige1975solutions}, Montoison and Orban
\cite{montoison2021tricg} showed that the $k$th iterate of TriCG can be updated
via short recurrences. For the convenience of the subsequent discussion, we
present the iterative scheme here. Let 
\[
    \mathbf{p}_j = \mathbf{D}_j^{-1} \mathbf{L}_j^{-1} (\beta_1 \mathbf{e}_1 +
                   \gamma_1 \mathbf{e}_2) =: 
    \begin{bmatrix}
        \pi_1 & \pi_2 & \cdots & \pi_{2j}
    \end{bmatrix}^\top, \quad
    \mathbf{G}_j = \mathbf{W}_j \mathbf{L}_j^{-\top} =: 
    \begin{bmatrix}
        \mathbf{g}_1^x & \mathbf{g}_2^x & \cdots & \mathbf{g}_{2j}^x \\
        \mathbf{g}_1^y & \mathbf{g}_2^y & \cdots & \mathbf{g}_{2j}^y
    \end{bmatrix}.
\]
We have the recurrences
\begin{subequations} \label{eq:pk}
    \begin{align}
        \pi_{2 j-1} 
        &= 
        \begin{cases}
            \beta_1 / d_1, & j=1, \\
            -\beta_j  \pi_{2 j-2} / d_{2 j-1}, & j \geq 2,
        \end{cases} \\
        \pi_{2 j} 
        &= 
        \begin{cases}
            \left(
                \gamma_1-\delta_1 \beta_1
            \right) / d_2, & j=1, \\
            -\left(
                \delta_j d_{2 j-1} \pi_{2 j-1}+\lambda_j d_{2 j-2} \pi_{2
                j-2}+\gamma_j  \pi_{2 j-3}
            \right) / d_{2 j}, & j \geq 2,
        \end{cases}
    \end{align}
\end{subequations}
and
\begin{subequations} \label{eq:g}
    \begin{align}
        \mbf g_{2 j-1}^x & =-\sigma_j \mbf g_{2 j-2}^x+\mbf u_j \label{eq:g1} \\
        \mbf g_{2 j-1}^y & =-\sigma_j \mbf g_{2 j-2}^y \\
        \mbf g_{2 j}^x & =-\delta_j \mbf g_{2 j-1}^x-\lambda_j \mbf g_{2 j-2}^x-\eta_j \mbf g_{2 j-3}^x \\
       \mbf  g_{2 j}^y & =-\delta_j \mbf g_{2  j-1}^y-\lambda_j \mbf g_{2 j-2}^y-\eta_j \mbf g_{2
        j-3}^y+\mbf v_j, \label{eq:g2}
    \end{align}
\end{subequations}
with  $\mathbf{g}_{-1}^x = \mathbf{g}_0^x = \mathbf{0}$ and
$\mathbf{g}_{-1}^y = \mathbf{g}_0^y =\mathbf{0}$.
The $j$th iterate is updated by
\begin{align*}
    \mathbf{x}_j &= \mathbf{x}_{j-1} + \pi_{2j-1}\mathbf{g}_{2j-1}^x +
    \pi_{2j}\mathbf{g}_{2j}^x, \\
    \mathbf{y}_j &= \mathbf{y}_{j-1} + \pi_{2j-1}\mathbf{g}_{2j-1}^y +
    \pi_{2j}\mathbf{g}_{2j}^y,
\end{align*} with  $\mbf x_0=\mbf 0$ and $\mbf y_0=\mbf 0$.
The corresponding residual norm is
\[
    \norm{\mathbf{r}_j}_{\mathbf{H}^{-1}} = 
    \begin{cases}
        \sqrt{\gamma_1^2 + \beta_1^2}, & j = 0, \\
        \sqrt{\bigl(\gamma_{j+1}(\pi_{2j-1}-\delta_j\pi_{2j})\bigr)^2 +
        (\beta_{j+1}\pi_{2j})^2}, & j \ge 1.
    \end{cases}
\]

\section{Deflation of elliptic singular values} 
\label{sec:deflation}

In this section, we will introduce deflation techniques to mitigate the
influence of large elliptic singular values. Given two SPD matrices
$\mathbf{M} \in \mathbb{R}^{m\times m}$ and $\mathbf{N} \in \mathbb{R}^{n\times
n}$, the elliptic singular value decomposition (ESVD)
\cite{arioli2013generalized} of a matrix $\mathbf{A} \in \mathbb{R}^{m\times n}$
is defined as below
\[
    \mathbf{A} = \mathbf{M} \widetilde{\mathbf{U}} \bm{\Sigma}
    \widetilde{\mathbf{V}}^\top \mathbf{N},
\]
where $\widetilde{\mathbf{U}} \in \mathbb{R}^{m \times m}$ and
$\widetilde{\mathbf{V}} \in \mathbb{R}^{n \times n}$ satisfy
$\widetilde{\mathbf{U}}^\top \mathbf{M} \widetilde{\mathbf{U}} = \mathbf{I}_m$
and $\widetilde{\mathbf{V}}^\top \mathbf{N} \widetilde{\mathbf{V}} =
\mathbf{I}_n$, and $\bm{\Sigma} \in \mathbb{R}^{m \times n}$ is a diagonal
matrix whose diagonal elements $\sigma_i$ are nonnegative and in nonincreasing
order (i.e., $\sigma_1 \ge \sigma_2 \ge \cdots \ge \sigma_d \ge 0$, $d =
\min\{m, n\}$). Clearly, the ESVD of $\mathbf{A}$ is equivalent to the standard
SVD of $\mathbf{M}^{-\frac{1}{2}} \mathbf{A} \mathbf{N}^{-\frac{1}{2}}$. 

Now we consider the two-sided preconditioned matrix $\mathbf{H}^{-\frac{1}{2}} \mathbf{K} \mathbf{H}^{-\frac{1}{2}}$. From the ESVD of $\mathbf{A}$, we have
\[
    \mathbf{H}^{-\frac{1}{2}} \mathbf{K} \mathbf{H}^{-\frac{1}{2}} =\bem \mbf I
    & \mathbf{M}^{-\frac{1}{2}} \mathbf{A}
    \mathbf{N}^{-\frac{1}{2}} \\ \mathbf{N}^{-\frac{1}{2}} \mathbf{A}^\top
    \mathbf{M}^{-\frac{1}{2}} & -\mbf I \eem= \bem \mbf
    M^{\frac{1}{2}}\widetilde{\mbf U} & \\ & \mbf N^{\frac{1}{2}}\widetilde{\mbf
    V} \eem \begin{bmatrix}
        \mathbf{I} & \bm{\Sigma} \\
        \bm{\Sigma}^\top & -\mathbf{I}
    \end{bmatrix}
    \bem\widetilde{\mbf U}^\top \mbf M^{\frac{1}{2}} & \\ & \widetilde{\mbf V}^\top\mbf N^{\frac{1}{2}} \eem.
\]
Since $\mathbf{M}^{\frac{1}{2}} \widetilde{\mathbf{U}}$ and $\mathbf{N}^{\frac{1}{2}} \widetilde{\mathbf{V}}$ are both orthogonal, the eigenvalues of the
preconditioned matrix $\mathbf{H}^{-\frac{1}{2}} \mathbf{K}
\mathbf{H}^{-\frac{1}{2}}$ are
\[
    \lambda 
    \bigl(\mathbf{H}^{-\frac{1}{2}} \mathbf{K}
\mathbf{H}^{-\frac{1}{2}}
    \bigr) = \lambda 
    \biggl(
        \begin{bmatrix}
            \mathbf{I} & \bm{\Sigma} \\
            \bm{\Sigma}^\top & -\mathbf{I}
        \end{bmatrix}
    \biggr) = 
    \begin{cases}
        \pm \sqrt{\sigma_i^2 + 1}, & i = 1, \dots, r, \\
        1, & (m - r) \text{ times}, \\
        -1, & (n - r) \text{ times},
    \end{cases}
\]
where $r = \rank(\mathbf{A})$. This suggests that the elliptic singular values
of $\mathbf{A}$ affect the eigenvalue distribution of
$\mathbf{H}^{-\frac{1}{2}} \mathbf{K} \mathbf{H}^{-\frac{1}{2}}$, and the
spectrum of $\mathbf{H}^{-\frac{1}{2}} \mathbf{K} \mathbf{H}^{-\frac{1}{2}}$ is
confined to the interval $[-\sqrt{\sigma_1^2+1}, -1] \cup [1,
\sqrt{\sigma_1^2+1}]$.

Next we introduce a deflation strategy to improve the eigenvalue distribution. 
Let $\widetilde{\mathbf{U}}_k$ and $\widetilde{\mathbf{V}}_k$ be the matrices
consisting of the first $k$ columns of $\widetilde{\mathbf{U}}$ and
$\widetilde{\mathbf{V}}$, and let $\bm{\Sigma}_k$ denote the leading $ k \times k $
submatrix of $\bm{\Sigma}$.  
We
have the following relations:
\[
    \mathbf{A} \widetilde{\mathbf{V}}_k = \mathbf{M} \widetilde{\mathbf{U}}_k
    \bm{\Sigma}_k,\quad \mathbf{A}^\top \widetilde{\mathbf{U}}_k =
    \mathbf{N} \widetilde{\mathbf{V}}_k \bm{\Sigma}_k.
\]
Define two projectors $\mathbf{P}$ and $\mathbf{Q}$ as follows
\[
    \mathbf{P} = \mathbf{I} - \mathbf{M} \widetilde{\mathbf{U}}_k
    \widetilde{\mathbf{U}}_k^\top,\quad \mathbf{Q} = \mathbf{I} -
    \widetilde{\mathbf{V}}_k \widetilde{\mathbf{V}}_k^\top \mathbf{N}.
\]
We have 
\begin{equation} \label{relations_of_projections}
      \mathbf{P}=\mathbf{P}^2,\quad \mathbf{Q}=\mathbf{Q}^2,\quad \mathbf{P} \mathbf{M} =
    \mathbf{M} \mathbf{P}^\top, \quad \mathbf{N} \mathbf{Q} = \mathbf{Q}^\top
    \mathbf{N}, \quad \mathbf{PA} = \mathbf{AQ} = \mathbf{PAQ}. 
\end{equation}
For convenience, let $\mathbf{f} = \begin{bmatrix} \mathbf{b}^\top
& \mathbf{c}^\top \end{bmatrix}^\top$ and 
\begin{equation}\label{orth_projection}
    \bm{\mathcal{P}} =  
    \begin{bmatrix}
        \mathbf{P} & \\
                   & \mathbf{Q}^\top
    \end{bmatrix}.
\end{equation}  
We define the deflated system as
\begin{equation} \label{eq:deflated}
    \bm{\mathcal{P}} \mathbf{K} \widetilde{\mathbf{u}} = \bm{\mathcal{P}} \mathbf{f}. 
\end{equation} 
Straightforward computations yield 
\[
    \lambda 
    \left(\mathbf{H}^{-\frac{1}{2}} \bm{\mathcal{P}}\mathbf{K}
\mathbf{H}^{-\frac{1}{2}}
    \right) = 
    \begin{cases}
        \pm \sqrt{\sigma_i^2 + 1}, & i = k+1, \dots, r, \\
        1, & (m - r) \text{ times}, \\
        -1, & (n - r) \text{ times},\\
        0, & 2k \text{ times}.
    \end{cases}
\]
Since $\rank(\bm{\mathcal{P}}) = m + n - 2k$, applying $\bm{\mathcal{P}}$ does
not preserve the solution set, i.e., \eqref{eq:deflated} and
\eqref{eq:problem} are not equivalent. The following theorem tells us how to obtain the solution from the
deflated system \eqref{eq:deflated}.

\begin{theorem} \label{thm:correct_x}
    Let $\widetilde{\mathbf{u}}$ be a solution of the deflated system
    \eqref{eq:deflated}. Then, the solution of the system \eqref{eq:problem} is
    given by
    \begin{equation} \label{eq:correct_u}
        \mathbf{u} = \mathbf{Z}_k \left( \mathbf{Z}_k^\top \mathbf{KZ}_k
        \right)^{-1} \mathbf{Z}_k^\top \mathbf{f} + \bm{\mathcal{P}}^\top
        \widetilde{\mathbf{u}}, \quad 
        \mathbf{Z}_k = 
        \begin{bmatrix}
            \widetilde{\mathbf{U}}_k & \\
            & \widetilde{\mathbf{V}}_k
        \end{bmatrix}.
    \end{equation}
\end{theorem}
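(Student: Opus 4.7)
The plan is to take the right-hand side of \eqref{eq:correct_u} as an ansatz and directly verify that it satisfies $\mathbf{K}\mathbf{u}=\mathbf{f}$; since $\mathbf{K}$ is nonsingular, this pins down the unique solution of \eqref{eq:problem}. With $\bm{\mu}:=(\mathbf{Z}_k^\top\mathbf{K}\mathbf{Z}_k)^{-1}\mathbf{Z}_k^\top\mathbf{f}$, the computation splits as
\[
  \mathbf{K}\mathbf{u} \;=\; \mathbf{K}\mathbf{Z}_k\bm{\mu} \;+\; \mathbf{K}\bm{\mathcal{P}}^\top\widetilde{\mathbf{u}},
\]
and the goal is to match the second summand with $\bm{\mathcal{P}}\mathbf{f}$ and the first summand with $(\mathbf{I}-\bm{\mathcal{P}})\mathbf{f}$, so that their sum recovers $\mathbf{f}$.

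The second summand is handled by the commutation identity $\mathbf{K}\bm{\mathcal{P}}^\top=\bm{\mathcal{P}}\mathbf{K}$. Expanding both sides blockwise, this reduces to $\mathbf{M}\mathbf{P}^\top=\mathbf{P}\mathbf{M}$, $\mathbf{N}\mathbf{Q}=\mathbf{Q}^\top\mathbf{N}$, and $\mathbf{A}\mathbf{Q}=\mathbf{P}\mathbf{A}$ together with its transpose $\mathbf{A}^\top\mathbf{P}^\top=\mathbf{Q}^\top\mathbf{A}^\top$, all of which are supplied by \eqref{relations_of_projections}. Combined with the deflated equation \eqref{eq:deflated}, this commutation yields $\mathbf{K}\bm{\mathcal{P}}^\top\widetilde{\mathbf{u}}=\bm{\mathcal{P}}\mathbf{K}\widetilde{\mathbf{u}}=\bm{\mathcal{P}}\mathbf{f}$, as required.

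For the first summand the key observation is the factorization
\[
  \mathbf{K}\mathbf{Z}_k
  \;=\;
  \begin{bmatrix}\mathbf{M}\widetilde{\mathbf{U}}_k & \\ & \mathbf{N}\widetilde{\mathbf{V}}_k\end{bmatrix}
  \begin{bmatrix}\mathbf{I}_k & \bm{\Sigma}_k \\ \bm{\Sigma}_k & -\mathbf{I}_k\end{bmatrix},
\]
which is immediate from $\mathbf{A}\widetilde{\mathbf{V}}_k=\mathbf{M}\widetilde{\mathbf{U}}_k\bm{\Sigma}_k$ and $\mathbf{A}^\top\widetilde{\mathbf{U}}_k=\mathbf{N}\widetilde{\mathbf{V}}_k\bm{\Sigma}_k$. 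Using the $\mathbf{M}$- and $\mathbf{N}$-orthonormality of $\widetilde{\mathbf{U}}_k$ and $\widetilde{\mathbf{V}}_k$, the second factor coincides with $\mathbf{Z}_k^\top\mathbf{K}\mathbf{Z}_k$, whose eigenvalues are $\pm\sqrt{\sigma_i^2+1}$; hence it is nonsingular and $\bm{\mu}$ is well defined. Applying the factorization to $\mathbf{K}\mathbf{Z}_k\bm{\mu}$ collapses $\mathbf{Z}_k^\top\mathbf{K}\mathbf{Z}_k$ against its inverse hidden in $\bm{\mu}$, and what remains equals $(\mathbf{I}-\bm{\mathcal{P}})\mathbf{f}$ on the nose from the definitions of $\bm{\mathcal{P}}$ and $\mathbf{f}$.

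Adding the two pieces then gives $\mathbf{K}\mathbf{u}=(\mathbf{I}-\bm{\mathcal{P}})\mathbf{f}+\bm{\mathcal{P}}\mathbf{f}=\mathbf{f}$, and nonsingularity of $\mathbf{K}$ identifies this $\mathbf{u}$ with the unique solution of \eqref{eq:problem}. The only real care is bookkeeping how $\mathbf{P}$, $\mathbf{Q}$, $\mathbf{P}^\top$, $\mathbf{Q}^\top$ interact with the two blocks of $\mathbf{K}$, which is exactly what the identities in \eqref{relations_of_projections} are designed to encode; there is no deeper obstacle.
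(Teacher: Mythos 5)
Your proposal is correct and follows essentially the same route as the paper's proof: decompose $\mathbf{K}\mathbf{u}$ into $\mathbf{K}\mathbf{Z}_k\bm{\mu}=(\mathbf{I}-\bm{\mathcal{P}})\mathbf{f}$ and $\mathbf{K}\bm{\mathcal{P}}^\top\widetilde{\mathbf{u}}=\bm{\mathcal{P}}\mathbf{K}\widetilde{\mathbf{u}}=\bm{\mathcal{P}}\mathbf{f}$, then add. The paper declares the two key identities ($\bm{\mathcal{P}}\mathbf{K}=\mathbf{K}\bm{\mathcal{P}}^\top$ and $\bm{\mathcal{P}}=\mathbf{I}-\mathbf{K}\mathbf{Z}_k(\mathbf{Z}_k^\top\mathbf{K}\mathbf{Z}_k)^{-1}\mathbf{Z}_k^\top$) ``straightforward to verify,'' and your write-up merely spells out that verification via the blockwise relations in \eqref{relations_of_projections} and the explicit factorization of $\mathbf{K}\mathbf{Z}_k$, also confirming in passing the invertibility of $\mathbf{Z}_k^\top\mathbf{K}\mathbf{Z}_k$.
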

\begin{proof}
    It is
    straightforward to verify that
    \[ \bm{\mathcal{P}} \mathbf{K} = \mathbf{K} \bm{\mathcal{P}}^\top,\quad 
        \bm{\mathcal{P}} = \mathbf{I} - \mathbf{KZ}_k \left( \mathbf{Z}_k^\top
        \mathbf{K} \mathbf{Z}_k \right)^{-1} \mathbf{Z}_k^\top. 
    \]
    Then, we have
    \begin{align*}
        \mathbf{Ku} &= \mathbf{K} \mathbf{Z}_k \left( \mathbf{Z}_k^\top
        \mathbf{KZ}_k \right)^{-1} \mathbf{Z}_k^\top \mathbf{f} + \mathbf{K}
        \bm{\mathcal{P}}^\top \widetilde{\mathbf{u}} \\
        &= \left( \mathbf{I} - \bm{\mathcal{P}} \right) \mathbf{f} +
        \bm{\mathcal{P}} \mathbf{K} \widetilde{\mathbf{u}} = \mathbf{f}.
        \qedhere
    \end{align*} 
\end{proof}
Note that $\mathbf{Z}_k^\top \mathbf{KZ}_k \in \mathbb{R}^{2k \times 2k}$, and
thus the computational cost of $\mathbf{Z}_k \left( \mathbf{Z}_k^\top \mathbf{KZ}_k
\right)^{-1} \mathbf{Z}_k^\top \mathbf{f}$ is not significant. If the deflated system \eqref{eq:deflated} is solved approximately and the approximate solution of \eqref{eq:problem} is obtained by \eqref{eq:correct_u}, the following proposition provides the relations between the residuals and errors of \eqref{eq:problem} and \eqref{eq:deflated}.

\begin{proposition} 
    Let $\mathbf{u}_\star$, $\widetilde{\mathbf{u}}_\star$, and
    $\widetilde{\mathbf{u}}$ be the exact solution of \eqref{eq:problem}, an
    exact solution of \eqref{eq:deflated}, and an approximate solution of \eqref{eq:deflated},
    respectively. If $\mathbf{u}$ is obtained via \eqref{eq:correct_u}, then we
    have
    \[
        \mathbf{f} - \mathbf{Ku} = 
        \bm{\mathcal{P}} (\mathbf{f} - \mathbf{K} \widetilde{\mathbf{u}}),\qquad \mathbf{u}_\star - \mathbf{u} = \bm{\mathcal{P}}^\top
        (\widetilde{\mathbf{u}}_\star - \widetilde{\mathbf{u}}),
    \]
    and
    \[
        \norm{\mathbf{u}_\star - \mathbf{u}}_\mathbf{H} \le
        \norm{\widetilde{\mathbf{u}}_\star - \widetilde{\mathbf{u}}}_\mathbf{H},
    \]
\end{proposition}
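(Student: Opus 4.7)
The plan is to establish the three claims in order, treating the first two as direct algebraic consequences of \eqref{eq:correct_u} and the identities proved in \Cref{thm:correct_x}, and then handling the inequality by showing that $\bm{\mathcal{P}}^\top$ is an $\mathbf{H}$-orthogonal projector.

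For the residual identity, I would substitute \eqref{eq:correct_u} into $\mathbf{K}\mathbf{u}$. Using the two identities $\bm{\mathcal{P}} = \mathbf{I} - \mathbf{K}\mathbf{Z}_k(\mathbf{Z}_k^\top\mathbf{K}\mathbf{Z}_k)^{-1}\mathbf{Z}_k^\top$ and $\mathbf{K}\bm{\mathcal{P}}^\top = \bm{\mathcal{P}}\mathbf{K}$ (both established in the proof of \Cref{thm:correct_x}), the computation in that proof in fact shows $\mathbf{K}\mathbf{u} = (\mathbf{I}-\bm{\mathcal{P}})\mathbf{f} + \bm{\mathcal{P}}\mathbf{K}\widetilde{\mathbf{u}}$, so $\mathbf{f} - \mathbf{K}\mathbf{u} = \bm{\mathcal{P}}\mathbf{f} - \bm{\mathcal{P}}\mathbf{K}\widetilde{\mathbf{u}} = \bm{\mathcal{P}}(\mathbf{f} - \mathbf{K}\widetilde{\mathbf{u}})$. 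For the error identity, I apply \eqref{eq:correct_u} to both $\widetilde{\mathbf{u}}_\star$ (giving $\mathbf{u}_\star$, by the theorem applied to the exact solution) and to $\widetilde{\mathbf{u}}$ (giving $\mathbf{u}$); subtracting cancels the common $\mathbf{Z}_k(\mathbf{Z}_k^\top\mathbf{K}\mathbf{Z}_k)^{-1}\mathbf{Z}_k^\top\mathbf{f}$ term, leaving $\mathbf{u}_\star - \mathbf{u} = \bm{\mathcal{P}}^\top(\widetilde{\mathbf{u}}_\star - \widetilde{\mathbf{u}})$.

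For the norm inequality, the key observation is that $\bm{\mathcal{P}}^\top$ is an $\mathbf{H}$-self-adjoint projector. Using the commutation relations in \eqref{relations_of_projections}, I compute
\[
    \mathbf{H}\bm{\mathcal{P}}^\top = \begin{bmatrix} \mathbf{M}\mathbf{P}^\top & \\ & \mathbf{N}\mathbf{Q} \end{bmatrix} = \begin{bmatrix} \mathbf{P}\mathbf{M} & \\ & \mathbf{Q}^\top\mathbf{N} \end{bmatrix} = \bm{\mathcal{P}}\mathbf{H},
\]
so $\bm{\mathcal{P}}^\top$ is self-adjoint in the $\mathbf{H}$-inner product. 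Combined with $(\bm{\mathcal{P}}^\top)^2 = \bm{\mathcal{P}}^\top$, this makes $\bm{\mathcal{P}}^\top$ an $\mathbf{H}$-orthogonal projector; in particular its $\mathbf{H}$-norm is at most $1$, giving $\|\mathbf{u}_\star - \mathbf{u}\|_\mathbf{H} = \|\bm{\mathcal{P}}^\top(\widetilde{\mathbf{u}}_\star - \widetilde{\mathbf{u}})\|_\mathbf{H} \le \|\widetilde{\mathbf{u}}_\star - \widetilde{\mathbf{u}}\|_\mathbf{H}$. Equivalently, setting $\mathbf{e} = \widetilde{\mathbf{u}}_\star - \widetilde{\mathbf{u}}$, one has $\|\bm{\mathcal{P}}^\top\mathbf{e}\|_\mathbf{H}^2 = \mathbf{e}^\top\bm{\mathcal{P}}\mathbf{H}\bm{\mathcal{P}}^\top\mathbf{e} = \mathbf{e}^\top\mathbf{H}\bm{\mathcal{P}}^\top\mathbf{e}$ and a single application of Cauchy--Schwarz in the $\mathbf{H}$-inner product yields the bound.

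I expect the first two identities to be routine bookkeeping, essentially repeating the computation already done in the theorem. The main point requiring care is the inequality, and specifically verifying the $\mathbf{H}$-self-adjointness of $\bm{\mathcal{P}}^\top$ via the commutation relations $\mathbf{P}\mathbf{M} = \mathbf{M}\mathbf{P}^\top$ and $\mathbf{N}\mathbf{Q} = \mathbf{Q}^\top\mathbf{N}$; once this is in hand, the bound is immediate.
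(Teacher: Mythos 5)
Your proof is correct and follows essentially the same route as the paper's: the residual identity is the same algebraic manipulation, the error identity comes from applying \Cref{thm:correct_x} to $\widetilde{\mathbf{u}}_\star$ (the paper adds an intermediate null-space characterization $\widetilde{\mathbf{u}}_\star = \mathbf{u}_\star + \mathbf{z}$ with $\mathbf{z}\in\nul(\bm{\mathcal{P}}^\top)$ before invoking the theorem, but this is not strictly needed and your direct application plus uniqueness is cleaner), and the norm bound hinges on the same fact that $\bm{\mathcal{P}}^\top$ is an $\mathbf{H}$-orthogonal projector, which the paper packages as the Pythagorean identity $\|\mathbf{y}\|_\mathbf{H}^2 = \|\bm{\mathcal{P}}^\top\mathbf{y}\|_\mathbf{H}^2 + \|(\mathbf{I}-\bm{\mathcal{P}}^\top)\mathbf{y}\|_\mathbf{H}^2$ while you phrase it as $\|\bm{\mathcal{P}}^\top\|_\mathbf{H}\le 1$ or via Cauchy--Schwarz.
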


\begin{proof}
    From \eqref{eq:correct_u} and $\bm{\mathcal{P}} \mathbf{K} = \mathbf{K}
    \bm{\mathcal{P}}^\top$, we have
    \[
        \mathbf{f} - \mathbf{Ku} = \mathbf{f} - (\mathbf{I} - \bm{\mathcal{P}})
        \mathbf{f} - \bm{\mathcal{P}} \mathbf{K} \widetilde{\mathbf{u}} =
        \bm{\mathcal{P}} (\mathbf{f} - \mathbf{K} \widetilde{\mathbf{u}}).
    \]
    From $\nul(\bm{\mathcal{P}} \mathbf{K}) = \nul(\bm{\mathcal{P}}^\top)$,
    $\widetilde{\mathbf{u}}_\star$ can be represented by
    \[
        \widetilde{\mathbf{u}}_\star = \mathbf{u}_\star + \mathbf{z}, \quad
        \mathbf{z} \in \nul(\bm{\mathcal{P}}^\top).
    \]
    From \Cref{thm:correct_x}, we have
    \[
        \mathbf{u}_\star = \mathbf{Z}_k \bigl( \mathbf{Z}_k^\top \mathbf{KZ}_k
        \bigr)^{-1} \mathbf{Z}_k^\top \mathbf{f} + \bm{\mathcal{P}}^\top
        \widetilde{\mathbf{u}}_\star.
    \]
    Since $\mathbf{u}$ is obtained via \eqref{eq:correct_u}, we have 
    \[
        \mathbf{u}_\star - \mathbf{u} = \bm{\mathcal{P}}^\top
        (\widetilde{\mathbf{u}}_\star - \widetilde{\mathbf{u}}).
    \]
    Since $(\mathbf{I} - \bm{\mathcal{P}}^\top)^\top \mathbf{H}
    \bm{\mathcal{P}}^\top = \mathbf{0}$, for any $\mathbf{y} \in
    \mathbb{R}^{m+n}$, we have
    \[
        \|\mathbf{y}\|_\mathbf{H}^2 = \|\bm{\mathcal{P}}^\top \mathbf{y} +
        (\mathbf{I} - \bm{\mathcal{P}}^\top) \mathbf{y} \|_\mathbf{H}^2 =
        \|\bm{\mathcal{P}}^\top \mathbf{y}\|_\mathbf{H}^2 + \|(\mathbf{I} -
        \bm{\mathcal{P}}^\top) \mathbf{y} \|_\mathbf{H}^2 \ge
        \|\bm{\mathcal{P}}^\top \mathbf{y}\|_\mathbf{H}^2.
    \]
    It follows that
    \[
        \norm{\mathbf{u}_\star - \mathbf{u}}_\mathbf{H} =
        \norm{\bm{\mathcal{P}}^\top (\widetilde{\mathbf{u}}_\star -
        \widetilde{\mathbf{u}})}_\mathbf{H} \le
        \norm{\widetilde{\mathbf{u}}_\star - \widetilde{\mathbf{u}}}_\mathbf{H}.
        \qedhere
    \]
\end{proof}

Now we show that \eqref{eq:deflated} can also be solved by TriCG.
From \eqref{relations_of_projections}, \eqref{eq:deflated} can be rewritten as
\begin{equation} \label{eq:deflated2}
    \begin{bmatrix}
        \mathbf{PM} & \mathbf{AQ} \\
        \mathbf{Q}^\top \mathbf{A}^\top & -\mathbf{Q}^\top\mathbf{N}
    \end{bmatrix}
    \begin{bmatrix}
        \widetilde{\mathbf{x}} \\ \widetilde{\mathbf{y}}
    \end{bmatrix} =
    \begin{bmatrix}
        \mathbf{Pb} \\ \mathbf{Q}^\top \mathbf{c}
    \end{bmatrix}.
\end{equation}
To establish the relationship between the gSSY tridiagonalization process
for the system \eqref{eq:problem} and for the deflated system \eqref{eq:deflated2}, we need the following theorem.

\begin{theorem} \label{thm:deflated_recurrence} 
Assume
    that \Cref{alg:orth_tri} with $\mathbf{b}$ and $\mathbf{c}$ replaced by $\mathbf{Pb}$ and
    $\mathbf{Q}^\top \mathbf{c}$ does not terminate at the first $k$ iterations. Then the generated $\mathbf{U}_{k+1}$ and $\mathbf{V}_{k+1}$ satisfy
    \[
        \range(\mathbf{U}_{k+1}) \subseteq \mathbf{M}^{-1} \range(\mathbf{P}),\quad
        \range(\mathbf{V}_{k+1}) \subseteq \mathbf{N}^{-1} \range(\mathbf{Q}^\top).
    \]
\end{theorem}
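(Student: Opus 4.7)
The plan is to prove the two inclusions by simultaneous induction on $j$, using the projector identities collected in \eqref{relations_of_projections} together with the recurrences in \Cref{alg:orth_tri}. The base case $j=0$ (i.e., the claims for $\mathbf{u}_1$ and $\mathbf{v}_1$) is immediate from the initialization $\beta_1 \mathbf{M}\mathbf{u}_1 = \mathbf{P}\mathbf{b}$ and $\gamma_1 \mathbf{N}\mathbf{v}_1 = \mathbf{Q}^\top \mathbf{c}$, which give $\mathbf{M}\mathbf{u}_1 \in \range(\mathbf{P})$ and $\mathbf{N}\mathbf{v}_1 \in \range(\mathbf{Q}^\top)$.

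For the inductive step, I would first establish two auxiliary facts that make the recurrences transparent. Using $\mathbf{P}\mathbf{M} = \mathbf{M}\mathbf{P}^\top$ we get $\mathbf{P}^\top = \mathbf{M}^{-1}\mathbf{P}\mathbf{M}$, and combining this with $\mathbf{P}^2 = \mathbf{P}$ shows that whenever $\mathbf{M}\mathbf{u}_i \in \range(\mathbf{P})$ one has $\mathbf{P}^\top \mathbf{u}_i = \mathbf{u}_i$; analogously, whenever $\mathbf{N}\mathbf{v}_i \in \range(\mathbf{Q}^\top)$ one has $\mathbf{Q}\mathbf{v}_i = \mathbf{v}_i$. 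Applying $\mathbf{P}\mathbf{A} = \mathbf{A}\mathbf{Q}$ to these identities then yields
\[
    \mathbf{A}\mathbf{v}_j = \mathbf{A}\mathbf{Q}\mathbf{v}_j = \mathbf{P}\mathbf{A}\mathbf{v}_j \in \range(\mathbf{P}), \quad
    \mathbf{A}^\top \mathbf{u}_j = \mathbf{A}^\top \mathbf{P}^\top \mathbf{u}_j = \mathbf{Q}^\top \mathbf{A}^\top \mathbf{u}_j \in \range(\mathbf{Q}^\top).
\]

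With these two observations the induction closes almost mechanically. Under the hypothesis $\mathbf{M}\mathbf{u}_i \in \range(\mathbf{P})$ for $i \le j$ and $\mathbf{N}\mathbf{v}_i \in \range(\mathbf{Q}^\top)$ for $i \le j$, the update in line~\ref{alg:update_u},
\[
    \beta_{j+1}\mathbf{M}\mathbf{u}_{j+1} = \mathbf{A}\mathbf{v}_j - \gamma_j \mathbf{M}\mathbf{u}_{j-1} - \alpha_j \mathbf{M}\mathbf{u}_j,
\]
is a linear combination of elements of $\range(\mathbf{P})$, hence lies in $\range(\mathbf{P})$, and since the process does not terminate we may divide by $\beta_{j+1}\neq 0$ to conclude $\mathbf{u}_{j+1} \in \mathbf{M}^{-1}\range(\mathbf{P})$. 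The same argument applied to line~\ref{alg:update_v} gives $\mathbf{v}_{j+1} \in \mathbf{N}^{-1}\range(\mathbf{Q}^\top)$.

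I do not anticipate a genuine obstacle here; the crux is really the little lemma that $\mathbf{P}^\top$ and $\mathbf{Q}$ act as the identity on the previously generated $\mathbf{u}_i$ and $\mathbf{v}_i$ respectively, which is what allows the factor $\mathbf{A}$ in the recurrences to be converted into a factor already carrying the right projector. Stating that lemma explicitly (or proving it inline) is the only step that requires any real care; everything else is an inductive bookkeeping argument that terminates at iteration $k+1$ by applying the step $k$ times.
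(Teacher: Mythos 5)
Your proof is correct and follows essentially the same inductive argument as the paper's: both rest on the key observation that the projector identities in \eqref{relations_of_projections} force $\mathbf{P}^\top \mathbf{u}_i = \mathbf{u}_i$ and $\mathbf{Q}\mathbf{v}_i = \mathbf{v}_i$ for the generated basis vectors, which converts $\mathbf{A}\mathbf{v}_j$ and $\mathbf{A}^\top \mathbf{u}_j$ into $\mathbf{P}\mathbf{A}\mathbf{v}_j$ and $\mathbf{Q}^\top \mathbf{A}^\top \mathbf{u}_j$ in the three-term recurrences. The only cosmetic difference is that you derive the fixed-point identity from $\mathbf{P}^\top = \mathbf{M}^{-1}\mathbf{P}\mathbf{M}$ and idempotence, while the paper packages the same computation as $\mathbf{P}^\top\mathbf{M}^{-1}\mathbf{P} = \mathbf{M}^{-1}\mathbf{P}$; you also make the role of the non-termination hypothesis (so that $\beta_{j+1},\gamma_{j+1}\neq 0$) slightly more explicit.
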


\begin{proof}
    The proof is by induction on $k$. Since $\beta_1 \mathbf{M} \mathbf{u}_1 =
    \mathbf{Pb}$ and $\gamma_1 \mathbf{N} \mathbf{v}_1 = \mathbf{Q}^\top
    \mathbf{c}$, we have $\mathbf{u}_1 \in \mathbf{M}^{-1} \range(\mathbf{P})$ and
    $\mathbf{v}_1 \in \mathbf{N}^{-1} \range(\mathbf{Q}^\top)$. We assume that
    the following relations hold:
    \[
        \range(\mathbf{U}_k) \subseteq \mathbf{M}^{-1}
        \range(\mathbf{P}),\quad \range(\mathbf{V}_k) \subseteq \mathbf{N}^{-1} \range(\mathbf{Q^\top}).
    \]
    From \eqref{relations_of_projections}, we obtain
    \[
        \mathbf{QN}^{-1}\mathbf{Q}^\top = \mathbf{N}^{-1}\mathbf{Q}^\top, \quad
        \mathbf{P}^\top\mathbf{M}^{-1}\mathbf{P} = \mathbf{M}^{-1}\mathbf{P}.
    \]
    Since $\mathbf{u}_k \in \mathbf{M}^{-1} \range(\mathbf{P})$, we have $\mathbf{u}_k =
    \mathbf{M}^{-1} \mathbf{Pz} = \mathbf{P}^\top\mathbf{M}^{-1}\mathbf{Pz} =
    \mathbf{P}^\top \mathbf{u}_k$ for some $\mathbf{z} \in \mathbb{R}^m$.
    Similarly, $\mathbf{v}_k = \mathbf{Qv}_k$ holds. Thus, $\mathbf{Av}_k =
    \mathbf{AQv}_k = \mathbf{PAv}_k$ and $\mathbf{A}^\top
    \mathbf{u}_k = \mathbf{A}^\top \mathbf{P}^\top \mathbf{u}_k =
    \mathbf{Q}^\top \mathbf{A}^\top \mathbf{u}_k$. From lines
    \ref{alg:update_u}--\ref{alg:update_v} of \Cref{alg:orth_tri}, we have
    \begin{align*}
        \beta_{k+1}\mathbf{Mu}_{k+1} &= \mathbf{Av}_k - \gamma_k
        \mathbf{Mu}_{k-1} - \alpha_k \mathbf{Mu}_k \\
        &= \mathbf{PAv}_k - \gamma_k \mathbf{Mu}_{k-1} - \alpha_k \mathbf{Mu}_k
        \in \range(\mathbf{P}),
        \shortintertext{and}
        \gamma_{k+1}\mathbf{Nv}_{k+1} &= \mathbf{A}^\top \mathbf{u}_k - \beta_k
        \mathbf{Nv}_{k-1} - \alpha_k \mathbf{Nv}_k \\
        &= \mathbf{Q}^\top \mathbf{A}^\top \mathbf{u}_k - \beta_k
        \mathbf{Nv}_{k-1} - \alpha_k \mathbf{Nv}_k \in \range(\mathbf{Q}^\top).
    \end{align*}
    Therefore,
    \[
        \mathbf{u}_{k+1} \in \mathbf{M}^{-1} \range(\mathbf{P}),\quad
        \mathbf{v}_{k+1} \in \mathbf{N}^{-1} \range(\mathbf{Q}^\top). \qedhere
    \] \end{proof}
 
Let $\mathbf{U}_k$ and
$\mathbf{V}_k$ be the matrices generated by \Cref{alg:orth_tri} with the input
$\{\mathbf{M}, \mathbf{N}, \mathbf{A}, \mathbf{Pb}, \mathbf{Q}^\top\mathbf{c}\}$. 
Then by \eqref{relation_of_orth_tri}, \eqref{relations_of_projections}, and
\Cref{thm:deflated_recurrence}, we have
\begin{align*}
    \begin{bmatrix}
        \mathbf{PM} & \mathbf{AQ} \\
        \mathbf{Q}^\top \mathbf{A}^\top & -\mathbf{Q}^\top\mathbf{N}
    \end{bmatrix}
    \begin{bmatrix}
        \mathbf{U}_k \\ & \mathbf{V}_k
    \end{bmatrix} 
    &= 
    \begin{bmatrix}
        \mathbf{PMU}_{k+1} \\ & \mathbf{Q}^\top \mathbf{NV}_{k+1}
    \end{bmatrix}
    \begin{bmatrix}
        \mathbf{I}_{k+1,k} & \mathbf{T}_{k+1,k} \\
        \mathbf{T}_{k,k+1}^\top & -\mathbf{I}_{k+1,k}
    \end{bmatrix} \\
    &=
    \begin{bmatrix}
        \mathbf{MU}_{k+1} \\ & \mathbf{NV}_{k+1}
    \end{bmatrix}
    \begin{bmatrix}
        \mathbf{I}_{k+1,k} & \mathbf{T}_{k+1,k} \\
        \mathbf{T}_{k,k+1}^\top & -\mathbf{I}_{k+1,k}
    \end{bmatrix}.
\end{align*}
This observation suggests that the
deflated system \eqref{eq:deflated2} can also be solved by utilizing TriCG in the same manner as the system \eqref{eq:problem}, and
the only modification is to replace $\mathbf{b}$ and $\mathbf{c}$ with
$\mathbf{Pb}$ and $\mathbf{Q}^\top\mathbf{c}$. 

The above results are obtained from the exact partial ESVD of $\mathbf{A}$ with $k$ elliptic singular triplets. However, in
practice, the exact partial ESVD of a matrix usually is not readily available. 
Fortunately, satisfactory numerical results can be obtained by using approximate
elliptic singular triplets. We mention that Dumitrasc et al.\ \cite{dumitrasc2024deflation}
proposed two iterative algorithm for computing approximate elliptic singular
triplets. In the next section, we introduce a gSSY process with deflated
restarting for computing approximate elliptic singular triplets, which can be
used in TriCG for deflation.

\section{A gSSY process with deflated restarting}
\label{sec:gssy_dr}

In this section, we introduce an algorithm to compute approximate elliptic singular triplets. The proposed  algorithm is closely related to the Lanczos-DR algorithm in \cite{abdelrehim2010deflated}. Recall that an elliptic singular triplet $\{\sigma_j,
\mathbf{u}_j, \mathbf{v}_j\}$ of $\mathbf{A}$ satisfies
\[
    \mathbf{Av}_j = \sigma_j \mathbf{M} \mathbf{u}_j, \quad \mathbf{A}^\top
    \mathbf{u}_j = \sigma_j \mathbf{N} \mathbf{v}_j.
\]
After $p$ iterations of \Cref{alg:orth_tri}, the following relations hold
\begin{equation} \label{SSY_relations2}
    \begin{split}
        \mathbf{AV}_p &= \mathbf{M} \mathbf{U}_p \mathbf{T}_p + \beta_{p+1}
        \mathbf{M} \mathbf{u}_{p+1} \mathbf{e}_p^\top, \\
        \mathbf{A}^\top \mathbf{U}_p &= \mathbf{N} \mathbf{V}_p
        \mathbf{T}_p^\top + \gamma_{p+1} \mathbf{N} \mathbf{v}_{p+1}
        \mathbf{e}_p^\top.
    \end{split} 
\end{equation}
Consider the SVD
\[
    \mathbf{T}_p = \widehat{\mathbf{U}} \widehat{\bm{\Sigma}}
    \widehat{\mathbf{V}}^\top
\]
where $\widehat{\mathbf{U}} = \begin{bmatrix} \widehat{\mathbf{u}}_1 &
\widehat{\mathbf{u}}_2 & \cdots & \widehat{\mathbf{u}}_p \end{bmatrix}$ and
$\widehat{\mathbf{V}} = \begin{bmatrix} \widehat{\mathbf{v}}_1 &
\widehat{\mathbf{v}}_2 & \cdots & \widehat{\mathbf{v}}_p \end{bmatrix}$ are orthogonal, and $\widehat{\bm{\Sigma}}$ is
diagonal with diagonal elements in nonincreasing order: $\widehat{\sigma}_1
\ge \widehat{\sigma}_2 \ge \dotsb \ge \widehat{\sigma}_p \ge 0$. 
Let\begin{equation} \label{approx_singular_triplets}
    \widetilde{\sigma}_j := \widehat{\sigma}_j, \quad
    \widetilde{\mathbf{u}}_j := \mathbf{U}_p \widehat{\mathbf{u}}_j, \quad  
    \widetilde{\mathbf{v}}_j := \mathbf{V}_p \widehat{\mathbf{v}}_j. 
\end{equation}
Combining \eqref{SSY_relations2} and \eqref{approx_singular_triplets} yields
\begin{equation} \label{relations_approx_singular_triplets}
    \begin{split}
        \mathbf{A} \widetilde{\mathbf{v}}_j &= \widetilde{\sigma}_j \mathbf{M}
        \widetilde{\mathbf{u}}_j + \beta_{p+1} \mathbf{M} \mathbf{u}_{p+1} (
        \mathbf{e}_p^\top \widehat{\mathbf{v}}_j ), \\
        \mathbf{A}^\top \widetilde{\mathbf{u}}_j &= \widetilde{\sigma}_j
        \mathbf{N} \widetilde{\mathbf{v}}_j + \gamma_{p+1} \mathbf{N}
        \mathbf{v}_{p+1} ( \mathbf{e}_p^\top \widehat{\mathbf{u}}_j ).
    \end{split} 
\end{equation}
The relations in \eqref{relations_approx_singular_triplets} suggest that the triplet $\{\widetilde{\sigma}_j,
\widetilde{\mathbf{u}}_j, \widetilde{\mathbf{v}}_j\}$ can be accepted as an
approximate elliptic singular triplet of $\mbf A$ if $\beta_{p+1} |\mathbf{e}_p^\top
\widehat{\mathbf{v}}_j|$ and $\gamma_{p+1} |\mathbf{e}_p^\top
\widetilde{\mathbf{u}}_j|$ are sufficiently small. In our algorithm we accepts $\{\widetilde{\sigma}_j, \widetilde{\mathbf{u}}_j,
\widetilde{\mathbf{v}}_j\}$ as an approximate elliptic singular triplet of $\mathbf{A}$ if 
\begin{equation} \label{eq:svd_stop_criteria}
    \max \big\{ \beta_{p+1} |\mathbf{e}_p^\top \widehat{\mathbf{v}}_j|,
    \gamma_{p+1} |\mathbf{e}_p^\top \widehat{\mathbf{u}}_j | \big\} \le
    \varepsilon_\mathrm{svd}.
\end{equation}

Assume that our objective is to compute the $k$ largest elliptic singular
triplets of $\mathbf{A}$. (Other elliptic singular
triplets can be computed similarly.)
Let $\widetilde{\mathbf{u}}_j$ and
$\widetilde{\mathbf{v}}_j$ for $1 \le j \le k$ be the vectors in
\eqref{approx_singular_triplets}. If \eqref{eq:svd_stop_criteria} does not hold
for some $j$, we improve the triplets in a deflated restarting fashion. The
strategy used here is closely related to that in \cite{baglama2013augmented}.
More precisely, we define
\begin{equation} \label{eq:new_basis_gssy-dr}
    \widetilde{\mathbf{V}}_k := \mathbf{V}_p \widehat{\mathbf{V}}_k, \quad
    \widetilde{\mathbf{U}}_k := \mathbf{U}_p \widehat{\mathbf{U}}_k, \quad
    \widetilde{\mathbf{V}}_{k+1} := 
    \begin{bmatrix} 
        \widetilde{\mathbf{V}}_k & \mathbf{v}_{p+1} 
    \end{bmatrix}, \quad 
    \widetilde{\mathbf{U}}_{k+1} := 
    \begin{bmatrix}
        \widetilde{\mathbf{U}}_k & \mathbf{u}_{p+1} 
    \end{bmatrix},
\end{equation}
where $\widehat{\mathbf{V}}_k$ and $\widehat{\mathbf{U}}_k$ are the matrices
consisting of the first $k$ columns of $\widehat{\mathbf{V}}$ and
$\widehat{\mathbf{U}}$, respectively. It follows from
\eqref{relations_approx_singular_triplets} that
\begin{align}
    \mathbf{A} \widetilde{\mathbf{V}}_{k+1} &= 
    \begin{bmatrix} 
        \mathbf{M} \mathbf{U}_p \widehat{\mathbf{U}}_k \widehat{\bm{\Sigma}}_k +
        \beta_{p+1} \mathbf{M} \mathbf{u}_{p+1} (\mathbf{e}_p^\top
        \widehat{\mathbf{V}}_k) & \mathbf{A} \mathbf{v}_{p+1} 
    \end{bmatrix}, \label{A_tilde_V}
    \shortintertext{and}
    \mathbf{A}^\top \widetilde{\mathbf{U}}_{k+1} &= 
    \begin{bmatrix}
        \mathbf{N} \mathbf{V}_p \widehat{\mathbf{V}}_k \widehat{\bm{\Sigma}}_k +
        \gamma_{p+1} \mathbf{N} \mathbf{v}_{p+1} (\mathbf{e}_p^\top
        \widehat{\mathbf{U}}_k) & \mathbf{A}^\top \mathbf{u}_{p+1} 
    \end{bmatrix}, \label{A_trans_tilde_U}
\end{align} where $\widehat{\bm{\Sigma}}_k=\diag\{\widehat{\sigma}_1,\widehat{\sigma}_2,\ldots,\widehat{\sigma}_k\}$.
Let $\widetilde{\mathbf{u}}_{k+2}$ and $\widetilde{\mathbf{v}}_{k+2}$ be defined as
\begin{equation} \label{eq:uk+2}
    \widetilde{\beta}_{k+2} \mathbf{M} \widetilde{\mathbf{u}}_{k+2} :=
    \left( \mathbf{I} - \mathbf{M} \widetilde{\mathbf{U}}_{k+1}
    \widetilde{\mathbf{U}}_{k+1}^\top \right) \mathbf{Av}_{p+1},\quad
    \widetilde{\gamma}_{k+2} \mathbf{N} \widetilde{\mathbf{v}}_{k+2} :=
    \left( \mathbf{I} - \mathbf{N} \widetilde{\mathbf{V}}_{k+1}
    \widetilde{\mathbf{V}}_{k+1}^\top \right) \mathbf{A}^\top \mathbf{u}_{p+1},
\end{equation}
respectively. Here, $\widetilde{\beta}_{k+2}$ and $\widetilde{\gamma}_{k+2}$ are
scaling factors such that $\|\widetilde{\mathbf{u}}_{k+2}\|_{\mathbf{M}} =
\|\widetilde{\mathbf{v}}_{k+2}\|_{\mathbf{N}} = 1$. Using the relations in
\eqref{relations_approx_singular_triplets} and the orthogonality, we obtain
\[
    \widetilde{\mathbf{U}}_{k}^\top \mathbf{Av}_{p+1} = 
    \bigl(\mathbf{N}
        \widetilde{\mathbf{V}}_k \widehat{\bm{\Sigma}}_k + \gamma_{p+1}
        \mathbf{N}\mathbf{v}_{p+1} \mathbf{e}_p^\top \widehat{\mathbf{U}}_k 
    \bigr)^\top 
    \mathbf{v}_{p+1} = \gamma_{p+1} \widehat{\mathbf{U}}_k^\top \mathbf{e}_p.
\]
Therefore, we have
\begin{equation} \label{orth_onto_U}
    \widetilde{\beta}_{k+2} \mathbf{M} \widetilde{\mathbf{u}}_{k+2} =
    \mathbf{Av}_{p+1} - \mathbf{M} \widetilde{\mathbf{U}}_k \bigl( \gamma_{p+1}
    \widehat{\mathbf{U}}_k^\top \mathbf{e}_p \bigr) - \widetilde{\alpha}_{k+1}
    \mathbf{M} \mathbf{u}_{p+1},\qquad \widetilde{\alpha}_{k+1} :=
    \mathbf{u}_{p+1}^\top \mathbf{Av}_{p+1}. 
\end{equation}
Similarly, we have
\begin{align}
    \widetilde{\gamma}_{k+2} \mathbf{N} \widetilde{\mathbf{v}}_{k+2} =
    \mathbf{A}^\top \mathbf{u}_{p+1} - \mathbf{N} \widetilde{\mathbf{V}}_k
    \bigl( \beta_{p+1} \widehat{\mathbf{V}}_k^\top \mathbf{e}_p \bigr) -
    \widetilde{\alpha}_{k+1} \mathbf{N} \mathbf{v}_{p+1}. \label{orth_onto_V}
\end{align}
Thus, substituting \eqref{orth_onto_U} and \eqref{orth_onto_V} into
\eqref{A_tilde_V} and \eqref{A_trans_tilde_U}, respectively, yields 
\begin{equation} 
    \begin{aligned}
        \mathbf{A} \widetilde{\mathbf{V}}_{k+1} &= \mathbf{M}
        \widetilde{\mathbf{U}}_{k+1} \widetilde{\mathbf{T}}_{k+1} +
        \widetilde{\beta}_{k+2} \mathbf{M} \widetilde{\mathbf{u}}_{k+2}
        \mathbf{e}_{k+1}^\top, \\
        \mathbf{A}^\top \widetilde{\mathbf{U}}_{k+1} &= \mathbf{N}
        \widetilde{\mathbf{V}}_{k+1} \widetilde{\mathbf{T}}_{k+1}^\top +
        \widetilde{\gamma}_{k+2} \mathbf{N} \widetilde{\mathbf{v}}_{k+2}
        \mathbf{e}_{k+1}^\top,
    \end{aligned} \label{deflated_decomposition}
\end{equation}
where $\widetilde{\mathbf{T}}_{k+1}$ is an arrow-shaped matrix of the form
\begin{align*}
    \widetilde{\mathbf{T}}_{k+1} =
    \begin{bmatrix}
        \widehat{\bm{\Sigma}}_k & \gamma_{p+1} \widehat{\mathbf{U}}_k^\top \mathbf{e}_p \\
        \beta_{p+1} \mathbf{e}_p^\top \widehat{\mathbf{V}}_k & \widetilde{\alpha}_{k+1}
    \end{bmatrix} =:
    \begin{bmatrix}
        \widetilde{\alpha}_1 & & & \widetilde{\gamma}_2 \\
        & \ddots & & \vdots \\
        &        & \ddots & \widetilde{\gamma}_{k+1} \\
        \widetilde{\beta}_2 & \dots & \widetilde{\beta}_{k+1} & \widetilde{\alpha}_{k+1}
    \end{bmatrix}.
\end{align*}
Note that $\widetilde{\alpha}_j = \widehat{\sigma}_j$ for $j \le k$. We continue 
generating the basis vectors in a similar fashion to \eqref{eq:uk+2} by
\begin{align*}
    \widetilde{\beta}_{j+1} \mathbf{M} \widetilde{\mathbf{u}}_{j+1} &:= 
    \bigl( 
        \mathbf{I} - \mathbf{M} \widetilde{\mathbf{U}}_{j}
        \widetilde{\mathbf{U}}_{j}^\top 
    \bigr) \mathbf{A} \widetilde{\mathbf{v}}_{j} 
    \shortintertext{and}
    \widetilde{\gamma}_{j+1} \mathbf{N} \widetilde{\mathbf{v}}_{j+1} &:= 
    \bigl( 
        \mathbf{I} - \mathbf{N} \widetilde{\mathbf{V}}_{j}
        \widetilde{\mathbf{V}}_{j}^\top 
    \bigr) \mathbf{A}^\top \widetilde{\mathbf{u}}_{j}
\end{align*}
for $j = k+2, k+3, \dots, p$. Utilizing \eqref{deflated_decomposition}, we
obtain
\begin{align*}
    \widetilde{\beta}_{k+3} \mathbf{M} \widetilde{\mathbf{u}}_{k+3} 
    &= \mathbf{A} \widetilde{\mathbf{v}}_{k+2} - \mathbf{M} 
    \widetilde{\mathbf{U}}_{k+1} 
    \bigl( 
        \mathbf{A}^\top
        \widetilde{\mathbf{U}}_{k+1} 
    \bigr)^\top \widetilde{\mathbf{v}}_{k+2} -
    \bigl( 
        \widetilde{\mathbf{u}}_{k+2}^\top \mathbf{A}
        \widetilde{\mathbf{v}}_{k+2} 
    \bigr) \mathbf{M} \widetilde{\mathbf{u}}_{k+2} \\
    &= \mathbf{A} \widetilde{\mathbf{v}}_{k+2} - \mathbf{M}
    \widetilde{\mathbf{U}}_{k+1} 
    \bigl( \widetilde{\mathbf{V}}_{k+1}
        \widetilde{\mathbf{T}}_{k+1}^\top + \widetilde{\gamma}_{k+2}
        \widetilde{\mathbf{v}}_{k+2} \mathbf{e}_{k+1}^\top 
    \bigr)^\top \mathbf{N}
    \widetilde{\mathbf{v}}_{k+2} - \widetilde{\alpha}_{k+2} \mathbf{M}
    \widetilde{\mathbf{u}}_{k+2} \\
    &= \mathbf{A} \widetilde{\mathbf{v}}_{k+2} - \widetilde{\gamma}_{k+2}
    \mathbf{M} \widetilde{\mathbf{u}}_{k+1} - \widetilde{\alpha}_{k+2} \mathbf{M}
    \widetilde{\mathbf{u}}_{k+2}.
\end{align*}
Similarly, we have
\[
    \widetilde{\gamma}_{k+3} \mathbf{N} \widetilde{\mathbf{v}}_{k+3} =
    \mathbf{A}^\top \widetilde{\mathbf{u}}_{k+2} - \widetilde{\beta}_{k+2}
    \mathbf{N} \widetilde{\mathbf{v}}_{k+1} - \widetilde{\alpha}_{k+2} \mathbf{N}
    \widetilde{\mathbf{v}}_{k+2}.
\]
This means that the basis vectors
$\widetilde{\mathbf{u}}_{j+1}$ and $\widetilde{\mathbf{u}}_{j+1}$ for $j = k+3, \dots, p$, can be obtained via the same three-term recurrences as those of \Cref{alg:orth_tri}. 
And we have the relations
\begin{equation} \label{new_deflated_decomposition}
    \begin{split}
        \mathbf{A} \widetilde{\mathbf{V}}_{p} &= \mathbf{M}
        \widetilde{\mathbf{U}}_{p} \widetilde{\mathbf{T}}_{p} +
        \widetilde{\beta}_{p+1} \mathbf{M} \widetilde{\mathbf{u}}_{p+1}
        \mathbf{e}_{p}^\top, \\
        \mathbf{A}^\top \widetilde{\mathbf{U}}_{p} 
        &= \mathbf{N} \widetilde{\mathbf{V}}_{p} \widetilde{\mathbf{T}}_{p}^\top
        + \widetilde{\gamma}_{p+1} \mathbf{N} \widetilde{\mathbf{v}}_{p+1}
        \mathbf{e}_{p}^\top, \\
        \widetilde{\mathbf{V}}_p \mathbf{N} \widetilde{\mathbf{V}}_p &=
        \widetilde{\mathbf{V}}_p \mathbf{N} \widetilde{\mathbf{V}}_p =
        \mathbf{I}_p,\quad \widetilde{\mathbf{T}}_p =
        \widetilde{\mathbf{U}}_p^\top \mathbf{A} \widetilde{\mathbf{V}}_p,
    \end{split}
\end{equation}
which are analogous to \eqref{SSY_relations2}, but
\[
    \widetilde{\mathbf{T}}_p = 
    \begin{bmatrix}
        \widetilde{\alpha}_1 & & & \widetilde{\gamma}_2 \\
                             & \ddots & & \vdots \\
                             &        & \ddots & \widetilde{\gamma}_{k+1} \\
        \widetilde{\beta}_2 & \dots & \widetilde{\beta}_{k+1} &
        \widetilde{\alpha}_{k+1} & \widetilde{\gamma}_{k+2} \\
                                 & & & \widetilde{\beta}_{k+2} &
        \widetilde{\alpha}_{k+2} & \ddots \\
                                 & & & & \ddots & \ddots &
                                 \widetilde{\gamma}_{p} \\
                                 & & & & & \widetilde{\beta}_{p} &
                                 \widetilde{\alpha}_{p}
    \end{bmatrix}
\]
is no longer tridiagonal. 
Replacing \eqref{SSY_relations2} with
\eqref{new_deflated_decomposition} and repeating the above procedure yields a new algorithm called the gSSY process with deflated restarting (gSSY-DR($p,k$)) for computing approximate partial ESVD of $\mathbf{A}$. We present the implementation of gSSY-DR($p,k$) in \Cref{alg:approx_svd}.

\begin{algorithm}[htbp]
    \caption{gSSY-DR$(p,k)$}
    \label{alg:approx_svd}

    \KwIn{$\mathbf{M}$, $\mathbf{N}$, $\mathbf{A}$, $\mathbf{b}$, $\mathbf{c}$,
        $p$--maximum subspace dimension, 
        $k$--number of desired elliptic singular triplets,
        $\varepsilon_\mathrm{svd}$--tolerance for approximate elliptic singular triplets, 
        \texttt{maxcycle}--maximum number of cycles.
    }

    \KwOut{Approximate left and right elliptic singular vectors $\mathbf{U}_k$ and
        $\mathbf{V}_k$, 
        and approximate elliptic singular values $\bm{\Sigma}_k$.
    }

    $\beta_1 \mathbf{M} \mathbf{u}_1 = \mathbf{b}$, $\gamma_1 \mathbf{N}
    \mathbf{v}_1 = \mathbf{c}$ \;

    $\mathbf{U}_1 = [\mathbf{u}_1]$, $\mathbf{V}_1 = [\mathbf{v}_1]$\;
    $k_{\rm aug} = k$, $k = 0$ \tcc*[r]{Set the dimension of augmentation to zero
    before the first cycle}

    \For(\tcc*[f]{Outer cycle}){$\mathtt{outerit} = 1,2,\dots,\mathtt{maxcycle}$}
    {
        $\mathbf{q} = \mathbf{Av}_{k+1} - \mathbf{MU}_k \mathbf{T}_{1:k,k+1}$
        \label{alg_line:gssy_outer_q}\;

        $\mathbf{p} = \mathbf{A}^\top \mathbf{u}_{k+1} - \mathbf{NV}_k
        \mathbf{T}_{k+1,1:k}^\top$\;

        $\alpha_{k+1} = \mathbf{u}_{k+1}^\top \mathbf{q}$\;

        $\beta_{k+2} \mathbf{Mu}_{k+2} = \mathbf{q} - \alpha_{k+1}
        \mathbf{u}_{k+1}$\;

        $\gamma_{k+2} \mathbf{Nv}_{k+2} = \mathbf{p} - \alpha_{k+1}
        \mathbf{v}_{k+1}$ \label{alg_line:gssy_outer_v}\;

        $\mathbf{T}_{k+1,k+1} = \alpha_{k+1}$\;

        \For {$j = k+2, k+3, \dots, p$}
        {
            $\mathbf{q} = \mathbf{Av}_j - \gamma_j \mathbf{M} \mathbf{u}_{j-1}$
            \label{alg_line:gssy_inner_q}\;

            $\mathbf{p} = \mathbf{A}^\top \mathbf{u}_j - \beta_j \mathbf{N}
            \mathbf{v}_{j-1}$\;

            $\alpha_j = \mathbf{u}_j^\top \mathbf{q}$\;

            $\mathbf{M} \mathbf{u} = \mathbf{q} - \alpha_j \mathbf{M}
            \mathbf{u}_j$\;
            
            $\mathbf{N} \mathbf{v} = \mathbf{p} - \alpha_j \mathbf{N}
            \mathbf{v}_j$ \label{alg_line:gssy_inner_v}\;
            
            $\mathbf{T}_{j, j} = \alpha_j$, $\mathbf{T}_{j-1, j} = \gamma_{j}$,
            $\mathbf{T}_{j, j-1} = \beta_{j}$ \label{alg_line:gssy_update_T} \;

            $\mathbf{U}_{j} = \begin{bmatrix} \mathbf{U}_{j-1} & \mathbf{u}_j
            \end{bmatrix}$, $\mathbf{V}_{j} = \begin{bmatrix} \mathbf{V}_{j-1} 
            & \mathbf{v}_j \end{bmatrix}$\;

            Reorthogonalization: $\mathbf{M} \mathbf{u} = (\mathbf{I} -
            \mathbf{M} \mathbf{U}_j \mathbf{U}_j^\top) \mathbf{M} \mathbf{u}$
            \label{alg_line:reorth_u}\; 

            Reorthogonalization: $\mathbf{N} \mathbf{v} = (\mathbf{I} -
            \mathbf{N} \mathbf{V}_j \mathbf{V}_j^\top) \mathbf{N} \mathbf{v}$
            \label{alg_line:reorth_v} \label{alg_line:gssy_reorth_v}\; 

            $\beta_{j+1} = \|\mathbf{u}\|_{\mathbf{M}}$, $\gamma_{j+1} =
            \|\mathbf{v}\|_{\mathbf{N}}$\;

            $\mathbf{u}_{j+1} = \mathbf{u} / \beta_{j+1}$, $\mathbf{v}_{j+1} =
            \mathbf{v} / \gamma_{j+1}$\;
        }

        $k = k_{\rm aug}$ \tcc*[r]{Recover dimension of augmentation to $k$} Compute
        the SVD of $\mathbf{T}$, and store the $k$ desired elliptic singular
        triplets in $\widehat{\mathbf{U}}_k,\ \bm{\Sigma}_k$, and 
        $\widehat{\mathbf{V}}_k$ \;

        Let $\mathbf{U}_k = \mathbf{U}_p \widehat{\mathbf{U}}_k$ and
        $\mathbf{V}_k = \mathbf{V}_p \widehat{\mathbf{V}}_k$ \;

        \For(\tcc*[f]{Check the number of converged elliptic singular triplets}) 
        {$i = 1,2,\dots,k$ \label{line:gssy_check_converged_num1}} 
        {
            $\mathtt{num\_conv\_sv} = 0$ \;
            \If{
                $\max \{ \beta_{p+1} |\mathbf{e}_p^\top \widehat{\mathbf{V}}_k
                \mathbf{e}_i|, \gamma_{p+1} |\mathbf{e}_p^\top
                \widehat{\mathbf{U}}_k\mathbf{e}_i| \} \le
                \varepsilon_\mathrm{svd}$
            }
            {
                $\mathtt{num\_conv\_sv} = \mathtt{num\_conv\_sv} + 1$
            }
        } \label{line:gssy_check_converged_num2}

        \lIf{$\mathtt{num\_conv\_sv} = k$}{\textbf{stop}}

        $\mathbf{U}_{k+1} = \begin{bmatrix} \mathbf{U}_k & \mathbf{u}_{p+1}
        \end{bmatrix}$, $\mathbf{V}_{k+1} = \begin{bmatrix} \mathbf{V}_k &
        \mathbf{v}_{p+1} \end{bmatrix}$\; \smallskip

        $\mathbf{T}_{1:k,1:k} = \bm{\Sigma}_k$, $\mathbf{T}_{1:k,k+1}
        = \gamma_{p+1} \widehat{\mathbf{U}}_k^\top \mathbf{e}_p$,
        $\mathbf{T}_{k+1,1:k} = \beta_{p+1} \mathbf{e}_p^\top
        \widehat{\mathbf{V}}_k$\;
    }
\end{algorithm}

\begin{remark}
    The reorthogonalization steps (lines
    \ref{alg_line:reorth_u} and \ref{alg_line:reorth_v}) in
    \Cref{alg:approx_svd} are used to control the rounding errors. More reorthogonalization strategies can be employed, such as partial
    and selective reorthogonalization \cite{parlett1998symmetric}. 
\end{remark}

At the end of this section, we analyze the impact of employing the approximate elliptic
singular triplets computed from gSSY-DR($p,k$) (\Cref{alg:approx_svd}) on the deflation strategy proposed in the previous section.
We now use the approximate elliptic
singular vectors to construct the deflated system \eqref{eq:deflated} and compute the approximate solution of \eqref{eq:problem} via \eqref{eq:correct_u}.
The following theorem shows that
the upper bound of the residual norm of \eqref{eq:problem} is dictated by both
the residual norm of \eqref{eq:deflated} and the accuracy level of the
approximate elliptic singular triplets. If the solution of \eqref{eq:deflated}
and the approximate elliptic singular triplets are sufficiently accurate, we can
obtain a good enough solution for \eqref{eq:problem}.

\begin{theorem}
    Assume that the approximate elliptic singular triplets $\{ \widetilde{\mathbf{U}}_k,
    \widetilde{\mathbf{V}}_k, \widetilde{\bm{\Sigma}}_k \}$ are obtained via {\rm
    gSSY-DR($p,k$) (\Cref{alg:approx_svd})} with the stopping criterion
    \eqref{eq:svd_stop_criteria}. Assume that the projections $\mathbf{P}$, $\mathbf{Q}$,
    and $\bm{\mathcal{P}}$ in \eqref{orth_projection} are constructed using $\{
    \widetilde{\mathbf{U}}_k, \widetilde{\mathbf{V}}_k \}$. Let
    $\widetilde{\mathbf{u}}$ be an approximate solution of $\bm{\mathcal{P}}
    \mathbf{Ku} = \bm{\mathcal{P}} \mathbf{f}$ and let $\mathbf{u}$ be computed via
    \eqref{eq:correct_u}. Then, for the residual
    norm, it holds that
    \[
        \norm{ \mathbf{f} - \mathbf{Ku} }_{\mathbf{H}^{-1}} \le \norm{
        \bm{\mathcal{P}} (\mathbf{f} - \mathbf{K} \widetilde{\mathbf{u}})
        }_{\mathbf{H}^{-1}} + \varepsilon_\mathrm{svd} \sqrt{k} \paren[\big]{
        (1+\widetilde{\sigma}_k^2)^{-\frac{1}{2}}
        \norm{\mathbf{f}}_{\mathbf{H}^{-1}} + \sqrt{2} \norm{\mathbf{H}
        \widetilde{\mathbf{u}}}_{\mathbf{H}^{-1}} }.
    \]
\end{theorem}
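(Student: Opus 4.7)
The plan is to decompose the residual $\mathbf{f}-\mathbf{K}\mathbf{u}$ into the ideal part $\bm{\mathcal{P}}(\mathbf{f}-\mathbf{K}\widetilde{\mathbf{u}})$ plus two correction terms whose sizes are governed by $\varepsilon_\mathrm{svd}$. From \eqref{relations_approx_singular_triplets} and \eqref{eq:new_basis_gssy-dr}, the approximate triplets satisfy the block identities $\mathbf{A}\widetilde{\mathbf{V}}_k=\mathbf{M}\widetilde{\mathbf{U}}_k\widetilde{\bm{\Sigma}}_k+\mathbf{E}_V$ and $\mathbf{A}^\top\widetilde{\mathbf{U}}_k=\mathbf{N}\widetilde{\mathbf{V}}_k\widetilde{\bm{\Sigma}}_k+\mathbf{E}_U$, where $\mathbf{E}_V=\mathbf{M}\mathbf{u}_{p+1}\bm{\alpha}^\top$ and $\mathbf{E}_U=\mathbf{N}\mathbf{v}_{p+1}\bm{\beta}^\top$ are rank-one with $\|\bm{\alpha}\|,\|\bm{\beta}\|\le\varepsilon_\mathrm{svd}\sqrt{k}$ by the stopping criterion \eqref{eq:svd_stop_criteria}. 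I will repeatedly use the orthogonalities $\widetilde{\mathbf{U}}_k^\top\mathbf{E}_V=\mathbf{0}$ and $\widetilde{\mathbf{V}}_k^\top\mathbf{E}_U=\mathbf{0}$, which follow from $\mathbf{U}_p^\top\mathbf{M}\mathbf{u}_{p+1}=\mathbf{0}$ and $\mathbf{V}_p^\top\mathbf{N}\mathbf{v}_{p+1}=\mathbf{0}$.

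First I would track how the two identities behind the proof of \Cref{thm:correct_x} degrade in the inexact setting. Setting $\mathbf{J}=\bigl[\begin{smallmatrix}\mathbf{I}&\widetilde{\bm{\Sigma}}_k\\ \widetilde{\bm{\Sigma}}_k&-\mathbf{I}\end{smallmatrix}\bigr]$ and $\mathbf{E}=\bigl[\begin{smallmatrix}\mathbf{0}&\mathbf{E}_V\\ \mathbf{E}_U&\mathbf{0}\end{smallmatrix}\bigr]$, a direct computation using the above orthogonalities shows that $\mathbf{Z}_k^\top\mathbf{K}\mathbf{Z}_k=\mathbf{J}$ still, and that $\mathbf{I}-\mathbf{K}\mathbf{Z}_k\mathbf{J}^{-1}\mathbf{Z}_k^\top=\bm{\mathcal{P}}-\mathbf{E}\mathbf{J}^{-1}\mathbf{Z}_k^\top$. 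An analogous computation of $\bm{\mathcal{P}}\mathbf{K}-\mathbf{K}\bm{\mathcal{P}}^\top$ gives a block off-diagonal matrix whose $(1,2)$-block is $\mathbf{E}_V\widetilde{\mathbf{V}}_k^\top\mathbf{N}-\mathbf{M}\widetilde{\mathbf{U}}_k\mathbf{E}_U^\top$. Substituting these into \eqref{eq:correct_u} and adding and subtracting $\bm{\mathcal{P}}\mathbf{K}\widetilde{\mathbf{u}}$ yields
\[
\mathbf{f}-\mathbf{K}\mathbf{u}=\bm{\mathcal{P}}(\mathbf{f}-\mathbf{K}\widetilde{\mathbf{u}})+(\bm{\mathcal{P}}\mathbf{K}-\mathbf{K}\bm{\mathcal{P}}^\top)\widetilde{\mathbf{u}}-\mathbf{E}\mathbf{J}^{-1}\mathbf{Z}_k^\top\mathbf{f},
\]
after which the triangle inequality in the $\mathbf{H}^{-1}$-norm reduces the task to bounding the two correction terms.

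For the commutator term I would exploit the rank-one structure of $\mathbf{E}_V,\mathbf{E}_U$ together with $\mathbf{u}_{p+1}^\top\mathbf{M}\widetilde{\mathbf{U}}_k=\mathbf{0}$ and $\mathbf{v}_{p+1}^\top\mathbf{N}\widetilde{\mathbf{V}}_k=\mathbf{0}$, which produce a clean Pythagorean split of $\|\cdot\|_{\mathbf{H}^{-1}}^2$ into two scalars, each bounded via Cauchy--Schwarz and the inequality $\widetilde{\mathbf{V}}_k\widetilde{\mathbf{V}}_k^\top\preceq\mathbf{N}^{-1}$ (and its $\mathbf{M}$-analogue), which follows from $\widetilde{\mathbf{V}}_k^\top\mathbf{N}\widetilde{\mathbf{V}}_k=\mathbf{I}_k$. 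Summing yields $\|(\bm{\mathcal{P}}\mathbf{K}-\mathbf{K}\bm{\mathcal{P}}^\top)\widetilde{\mathbf{u}}\|_{\mathbf{H}^{-1}}\le\sqrt{2}\,\varepsilon_\mathrm{svd}\sqrt{k}\,\|\mathbf{H}\widetilde{\mathbf{u}}\|_{\mathbf{H}^{-1}}$. For the deflation term I would use $\|\mathbf{J}^{-1}\|_2=(1+\widetilde{\sigma}_k^2)^{-1/2}$ (since the singular values of $\mathbf{J}$ are $\sqrt{1+\widetilde{\sigma}_j^2}$) together with the same $\preceq$ inequalities to get $\|\mathbf{Z}_k^\top\mathbf{f}\|_2\le\|\mathbf{f}\|_{\mathbf{H}^{-1}}$, and then the rank-one form of $\mathbf{E}$ delivers $\|\mathbf{E}\mathbf{J}^{-1}\mathbf{Z}_k^\top\mathbf{f}\|_{\mathbf{H}^{-1}}\le\varepsilon_\mathrm{svd}\sqrt{k}(1+\widetilde{\sigma}_k^2)^{-1/2}\|\mathbf{f}\|_{\mathbf{H}^{-1}}$.

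The main obstacle will be the bookkeeping across the $\mathbf{M}$-, $\mathbf{N}$-, $\mathbf{M}^{-1}$-, and $\mathbf{N}^{-1}$-norms, and in particular verifying that the identity $\mathbf{Z}_k^\top\mathbf{K}\mathbf{Z}_k=\mathbf{J}$ persists exactly in the approximate setting; this clean cancellation, which relies on $\widetilde{\mathbf{U}}_k^\top\mathbf{E}_V=\widetilde{\mathbf{V}}_k^\top\mathbf{E}_U=\mathbf{0}$, is what isolates the two error contributions cleanly and lets the bound reduce to the sum above without any cross term.
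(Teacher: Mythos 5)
Your proposal is correct and takes essentially the same route as the paper's proof: you use the identical residual decomposition $\mathbf{f}-\mathbf{K}\mathbf{u}=\bm{\mathcal{P}}(\mathbf{f}-\mathbf{K}\widetilde{\mathbf{u}})+(\bm{\mathcal{P}}\mathbf{K}-\mathbf{K}\bm{\mathcal{P}}^\top)\widetilde{\mathbf{u}}-\mathbf{E}\mathbf{J}^{-1}\mathbf{Z}_k^\top\mathbf{f}$ (the paper writes the commutator as $\bm{\mathcal{E}}_\mathcal{P}\widetilde{\mathbf{u}}$ and your $\mathbf{E},\mathbf{J}$ are its $\bm{\mathcal{E}}_Z,\bm{\mathcal{L}}_k$), with $\mathbf{Z}_k^\top\mathbf{K}\mathbf{Z}_k=\mathbf{J}$ persisting thanks to $\mathbf{U}_p^\top\mathbf{M}\mathbf{u}_{p+1}=\mathbf{V}_p^\top\mathbf{N}\mathbf{v}_{p+1}=\mathbf{0}$, and the same bounds $\|\mathbf{J}^{-1}\|=(1+\widetilde{\sigma}_k^2)^{-1/2}$ and $\|\mathbf{Z}_k^\top\mathbf{H}^{1/2}\|=1$. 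The only cosmetic difference is that you bound the commutator contribution via a Pythagorean split applied directly to the vector, whereas the paper first bounds the operator norm $\|\mathbf{H}^{-1/2}\bm{\mathcal{E}}_\mathcal{P}\mathbf{H}^{-1/2}\|\le\varepsilon_\mathrm{svd}\sqrt{2k}$ and then applies it to $\mathbf{H}^{1/2}\widetilde{\mathbf{u}}$; both rely on the same range-orthogonality of $\mathbf{M}^{-1/2}\mathbf{E}_u$ and $\mathbf{M}^{1/2}\widetilde{\mathbf{U}}_k$ (and the $\mathbf{N}$-analogue), so the arguments are interchangeable.
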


\begin{proof}
    From \eqref{relations_approx_singular_triplets}, we have the following relations
    \begin{alignat*}{2}
        \mathbf{A} \widetilde{\mathbf{V}}_k 
        &= \mathbf{M}
        \widetilde{\mathbf{U}}_k \widetilde{\bm{\Sigma}}_k + \mathbf{E}_u,
        &\quad \mathbf{E}_u &= \beta_{p+1}
        \mathbf{Mu}_{p+1} \mathbf{e}_p^\top
        \widehat{\mathbf{V}}_k, \\
        \mathbf{A}^\top \widetilde{\mathbf{U}}_k 
        &= \mathbf{N}
        \widetilde{\mathbf{V}}_k \widetilde{\bm{\Sigma}}_k + \mathbf{E}_v,
        &\quad \mathbf{E}_v &= \gamma_{p+1} \mathbf{Nv}_{p+1} \mathbf{e}_p^\top
        \widehat{\mathbf{U}}_k.
    \end{alignat*}
    Since $\mathbf{M}^{\frac{1}{2}} \mathbf{u}_{p+1}$ and
    $\mathbf{N}^{\frac{1}{2}} \mathbf{v}_{p+1}$ are orthonormal, by
    \eqref{eq:svd_stop_criteria}, we have
    \[
        \max \{ 
            \norm{\mathbf{M}^{-\frac{1}{2}} \mathbf{E}_u},
            \norm{\mathbf{N}^{-\frac{1}{2}} \mathbf{E}_v} 
        \} = 
        \max \{
            \beta_{p+1} \|\mathbf{e}_p^\top \widehat{\mathbf{V}}_k\|, 
            \gamma_{p+1} \|\mathbf{e}_p^\top \widehat{\mathbf{U}}_k\| 
        \} \le \varepsilon_\mathrm{svd} \sqrt{k}.
    \]
    We now present the relation between $\mathbf{PA}$ and
    $\mathbf{AQ}$. It follows that
    \begin{align*}
        \mathbf{PA} 
        &= \mathbf{A} - \mathbf{M} \widetilde{\mathbf{U}}_k
        (\mathbf{A}^\top \widetilde{\mathbf{U}}_k)^\top = \mathbf{A} -
        \mathbf{M} \widetilde{\mathbf{U}}_k (\mathbf{N} \widetilde{\mathbf{V}}_k
        \widetilde{\bm{\Sigma}}_k + \mathbf{E}_v)^\top \\
        &= \mathbf{A} - (\mathbf{M} \widetilde{\mathbf{U}}_k
        \widetilde{\bm{\Sigma}}_k) \widetilde{\mathbf{V}}_k^\top \mathbf{N} -
        \mathbf{M} \widetilde{\mathbf{U}}_k \mathbf{E}_v^\top \\
        &= \mathbf{A} - (\mathbf{A} \widetilde{\mathbf{V}}_k - \mathbf{E}_u)
        \widetilde{\mathbf{V}}_k^\top \mathbf{N} - \mathbf{M}
        \widetilde{\mathbf{U}}_k \mathbf{E}_v^\top \\
        &= \mathbf{A} (\mathbf{I} - \widetilde{\mathbf{V}}_k
        \widetilde{\mathbf{V}}_k^\top \mathbf{N}) + \mathbf{E}_u
        \widetilde{\mathbf{V}}_k^\top \mathbf{N} - \mathbf{M}
        \widetilde{\mathbf{U}}_k \mathbf{E}_v^\top \\
        &=\mathbf{AQ} +\mathbf{E}_u
        \widetilde{\mathbf{V}}_k^\top \mathbf{N} - \mathbf{M}
        \widetilde{\mathbf{U}}_k \mathbf{E}_v^\top.
    \end{align*} Define $\mathbf{E}_P:=\mathbf{E}_u
        \widetilde{\mathbf{V}}_k^\top \mathbf{N} - \mathbf{M}
        \widetilde{\mathbf{U}}_k \mathbf{E}_v^\top.$
    Since $(\mathbf{M}^{-\frac{1}{2}}\mathbf{E}_u)^\top \mathbf{M}^{\frac{1}{2}}\widetilde{\mathbf{U}}_k = \mathbf{0}$, $\mathbf{N}^{\frac{1}{2}} \widetilde{\mathbf{V}}_k$ and
    $\mathbf{M}^{\frac{1}{2}} \widetilde{\mathbf{U}}_k$ have orthonormal columns,
     we have 
    \begin{align*}
        \norm{ \mathbf{M}^{-\frac{1}{2}} \mathbf{E}_P \mathbf{N}^{-\frac{1}{2}}} & = \|\mathbf{M}^{-\frac{1}{2}} \mathbf{E}_u
        \widetilde{\mathbf{V}}_k^\top \mathbf{N}^{\frac{1}{2}}-\mathbf{M}^{\frac{1}{2}} \widetilde{\mathbf{U}}_k \mathbf{E}_v^\top
        \mathbf{N}^{-\frac{1}{2}}\| \\
        &\le
        \paren[\big]{ \norm{\mathbf{M}^{-\frac{1}{2}} \mathbf{E}_u
        \widetilde{\mathbf{V}}_k^\top \mathbf{N}^{\frac{1}{2}}}^2 +
        \norm{\mathbf{M}^{\frac{1}{2}} \widetilde{\mathbf{U}}_k \mathbf{E}_v^\top
        \mathbf{N}^{-\frac{1}{2}}}^2 }^{\frac{1}{2}} \\
        &= 
        \bigl(
            \|\mathbf{M}^{-\frac{1}{2}} \mathbf{E}_u\|^2 +
            \|\mathbf{E}_v^\top \mathbf{N}^{-\frac{1}{2}}\|^2
        \bigr)^{\frac{1}{2}}
        \\&\le 
        \varepsilon_\mathrm{svd} \sqrt{2k}.
    \end{align*}
Moreover, we have
\begin{align*}
    \mathbf{KZ}_k 
    &= 
    \begin{bmatrix}
        \mathbf{M} \widetilde{\mathbf{U}}_k & \mathbf{A}
        \widetilde{\mathbf{V}}_k \\
        \mathbf{A}^\top \widetilde{\mathbf{U}}_k & -\mathbf{N}
        \widetilde{\mathbf{V}}_k
    \end{bmatrix} =
    \begin{bmatrix}
        \mathbf{M} \widetilde{\mathbf{U}}_k & \mathbf{M}
        \widetilde{\mathbf{U}}_k \widetilde{\bm{\Sigma}}_k \\
        \mathbf{N} \widetilde{\mathbf{V}}_k \widetilde{\bm{\Sigma}}_k &
        -\mathbf{N} \widetilde{\mathbf{V}}_k
    \end{bmatrix} +
    \begin{bmatrix}
        \mathbf{0} & \mathbf{E}_u \\
        \mathbf{E}_v & \mathbf{0}
    \end{bmatrix} \\
    &= 
    \begin{bmatrix}
        \mathbf{M} \widetilde{\mathbf{U}}_k \\
            & \mathbf{N} \widetilde{\mathbf{V}}_k
    \end{bmatrix}
    \begin{bmatrix}
        \mathbf{I}_k & \widetilde{\bm{\Sigma}}_k \\
        \widetilde{\bm{\Sigma}}_k & -\mathbf{I}_k
    \end{bmatrix} + 
    \begin{bmatrix}
        \mathbf{0} & \mathbf{E}_u \\
        \mathbf{E}_v & \mathbf{0}
    \end{bmatrix},
\end{align*}
and
\begin{align*}
    \bm{\mathcal{P}} \mathbf{K} 
    &= 
    \begin{bmatrix}
        \mathbf{PM} & \mathbf{PA} \\
        \mathbf{Q}^\top \mathbf{A}^\top & -\mathbf{Q}^\top \mathbf{N} 
    \end{bmatrix}
    =
    \begin{bmatrix}
        \mathbf{MP}^\top & \mathbf{PA} \\
        \mathbf{Q}^\top \mathbf{A}^\top & -\mathbf{NQ} 
    \end{bmatrix} = 
    \begin{bmatrix}
        \mathbf{MP}^\top & \mathbf{AQ} + \mathbf{E}_P \\
        \mathbf{A}^\top \mathbf{P}^\top - \mathbf{E}_P^\top & -\mathbf{NQ} 
    \end{bmatrix} \\
    &= 
    \begin{bmatrix}
        \mathbf{MP}^\top & \mathbf{AQ} \\
        \mathbf{A}^\top \mathbf{P}^\top & -\mathbf{NQ}
    \end{bmatrix} +
    \begin{bmatrix}
        \mathbf{0} & \mathbf{E}_P \\
        -\mathbf{E}_P^\top & \mathbf{0}
    \end{bmatrix}.
\end{align*}
Define $$\bm{\mathcal{L}}_k:=\begin{bmatrix}
        \mathbf{I}_k & \widetilde{\bm{\Sigma}}_k \\
        \widetilde{\bm{\Sigma}}_k & -\mathbf{I}_k
    \end{bmatrix},\quad\bm{\mathcal{E}}_Z:=\begin{bmatrix}
        \mathbf{0} & \mathbf{E}_u \\
        \mathbf{E}_v & \mathbf{0}
    \end{bmatrix},\quad  \bm{\mathcal{E}}_P:=\begin{bmatrix}
        \mathbf{0} & \mathbf{E}_P \\
        -\mathbf{E}_P^\top & \mathbf{0}
    \end{bmatrix}.$$ 
We have
\[
    \mathbf{H}^{-\frac{1}{2}} \bm{\mathcal{E}}_Z = 
    \begin{bmatrix}
        \mathbf{0} & \mathbf{M}^{-\frac{1}{2}} \mathbf{E}_u \\
        \mathbf{N}^{-\frac{1}{2}} \mathbf{E}_v & \mathbf{0}
    \end{bmatrix}, \quad
    \mathbf{H}^{-\frac{1}{2}} \bm{\mathcal{E}}_\mathcal{P}
    \mathbf{H}^{-\frac{1}{2}} = 
    \begin{bmatrix}
        \mathbf{0} & \mathbf{M}^{-\frac{1}{2}} \mathbf{E}_P
        \mathbf{N}^{-\frac{1}{2}} \\
        -\mathbf{N}^{-\frac{1}{2}} \mathbf{E}_P^\top \mathbf{M}^{-\frac{1}{2}} &
        \mathbf{0}
    \end{bmatrix}. 
\]
It follows that
\[
    \norm{\mathbf{H}^{-\frac{1}{2}} \bm{\mathcal{E}}_Z} \le \max \{
    \norm{\mathbf{M}^{-\frac{1}{2}} \mathbf{E}_u},
    \norm{\mathbf{N}^{-\frac{1}{2}} \mathbf{E}_v} \} \le \varepsilon_\mathrm{svd}
    \sqrt{k}
\]
and
\[
    \norm{ \mathbf{H}^{-\frac{1}{2}} \bm{\mathcal{E}}_\mathcal{P}
    \mathbf{H}^{-\frac{1}{2}} } = \norm{ \mathbf{M}^{-\frac{1}{2}} \mathbf{E}_P
    \mathbf{N}^{-\frac{1}{2}} } \le \varepsilon_\mathrm{svd} \sqrt{2k}.
\]
Using $ \mathbf{KZ}_k=\mbf H\mbf Z_k\bm{\mathcal{L}}_k+\bm{\mathcal{E}}_Z$, $\mbf Z_k^\top\bm{\mathcal{E}}_Z=\mbf 0$, and $\bm{\mathcal{P}} \mathbf{K}= \mathbf{K}\bm{\mathcal{P}}^\top+\bm{\mathcal{E}}_P$, we obtain
\begin{align}
    \mathbf{Ku} 
    &= \mathbf{K} \mathbf{Z}_k \left( \mathbf{Z}_k^\top \mathbf{KZ}_k \right)^{-1} \mathbf{Z}_k^\top \mathbf{f} + \mathbf{K} \bm{\mathcal{P}}^\top \widetilde{\mathbf{u}} \notag \\
    &= (\mathbf{HZ}_k \bm{\mathcal{L}}_k + \bm{\mathcal{E}}_Z) \bm{\mathcal{L}}_k^{-1}
    \mathbf{Z}_k^\top \mathbf{f} + (\bm{\mathcal{P}} \mathbf{K} -
    \bm{\mathcal{E}}_\mathcal{P}) \widetilde{\mathbf{u}} \notag \\
    &= (\mathbf{I} - \bm{\mathcal{P}}) \mathbf{f} + \bm{\mathcal{E}}_Z
    \bm{\mathcal{L}}_k^{-1} \mathbf{Z}_k^\top \mathbf{f} + (\bm{\mathcal{P}}
    \mathbf{K} - \bm{\mathcal{E}}_\mathcal{P}) \widetilde{\mathbf{u}} \notag \\
    &= \mathbf{f} - \bm{\mathcal{P}} (\mathbf{f} - \mathbf{K}
    \widetilde{\mathbf{u}}) + \bm{\mathcal{E}}_Z \bm{\mathcal{L}}_k^{-1}
    \mathbf{Z}_k^\top \mathbf{f} - \bm{\mathcal{E}}_\mathcal{P} \widetilde{\mathbf{u}}. \label{eq:ku}
\end{align}
Note that we have 
\begin{align}
    \norm{ \bm{\mathcal{E}}_Z \bm{\mathcal{L}}_k^{-1} \mathbf{Z}_k^\top
    \mathbf{f}}_{\mathbf{H}^{-1}} 
    &= \norm{ \mathbf{H}^{-\frac{1}{2}} \bm{\mathcal{E}}_Z
    \bm{\mathcal{L}}_k^{-1} \mathbf{Z}_k^\top \mathbf{H}^{\frac{1}{2}}
    \mathbf{H}^{-\frac{1}{2}} \mathbf{f}} \notag\\
    &\le \|\mathbf{H}^{-\frac{1}{2}} \bm{\mathcal{E}}_Z\| \cdot
    \|\bm{\mathcal{L}}_k^{-1}\| \cdot \|\mathbf{Z}_k^\top
    \mathbf{H}^{\frac{1}{2}}\| \cdot \|\mathbf{H}^{-\frac{1}{2}} \mathbf{f}\| \notag\\
    &\le \varepsilon_\mathrm{svd} \sqrt{k}
    (1+\widetilde{\sigma}^2_k)^{-\frac{1}{2}}
    \norm{\mathbf{f}}_{\mathbf{H}^{-1}},
\end{align}
and
\begin{equation} \label{eq:bound}
    \norm{ \bm{\mathcal{E}}_\mathcal{P}
    \widetilde{\mathbf{u}}}_{\mathbf{H}^{-1}} = \norm{ \mathbf{H}^{-\frac{1}{2}}
    \bm{\mathcal{E}}_\mathcal{P} \mathbf{H}^{-\frac{1}{2}} \mathbf{H}^{\frac{1}{2}}
    \widetilde{\mathbf{u}} } \le \|\mathbf{H}^{-\frac{1}{2}}
    \bm{\mathcal{E}}_\mathcal{P} \mathbf{H}^{-\frac{1}{2}}\| \cdot
    \|\mathbf{H}^{\frac{1}{2}} \widetilde{\mathbf{u}}\| \le
    \varepsilon_\mathrm{svd} \sqrt{2k} \norm{ \mathbf{H} \widetilde{\mathbf{u}}}_{\mathbf{H}^{-1}}. 
\end{equation}
Combining \eqref{eq:ku}--\eqref{eq:bound} yields
\begin{align*}
        \norm{ \mathbf{f} - \mathbf{Ku} }_{\mathbf{H}^{-1}} &= \|\bm{\mathcal{P}} (\mathbf{f} - \mathbf{K}
    \widetilde{\mathbf{u}}) - \bm{\mathcal{E}}_Z \bm{\mathcal{L}}_k^{-1}
   \mathbf{Z}_k^\top \mathbf{f} + \bm{\mathcal{E}}_\mathcal{P} \widetilde{\mathbf{u}}\|_{\mathbf{H}^{-1}} \\ & \le \norm{
       \bm{\mathcal{P}} (\mathbf{f} - \mathbf{K} \widetilde{\mathbf{u}})
        }_{\mathbf{H}^{-1}} + \|\bm{\mathcal{E}}_Z \bm{\mathcal{L}}_k^{-1}
    \mathbf{Z}_k^\top \mathbf{f}\|_{\mathbf{H}^{-1}}+\norm{ \bm{\mathcal{E}}_\mathcal{P}
    \widetilde{\mathbf{u}}}_{\mathbf{H}^{-1}} \\ &\le \norm{
        \bm{\mathcal{P}} (\mathbf{f} - \mathbf{K} \widetilde{\mathbf{u}})
        }_{\mathbf{H}^{-1}} + \varepsilon_\mathrm{svd} \sqrt{k} \paren[\big]{
        (1+\widetilde{\sigma}_k^2)^{-\frac{1}{2}}
        \norm{\mathbf{f}}_{\mathbf{H}^{-1}} + \sqrt{2} \norm{\mathbf{H}
        \widetilde{\mathbf{u}}}_{\mathbf{H}^{-1}} }.\qedhere
    \end{align*}   
\end{proof}

\section{TriCG with deflated restarting} 
\label{sec:tricgdr}

Based on the gSSY process with deflated restarting, we propose a new method
called TriCG with deflated restarting (TriCG-DR) in this section. Let $p$ and
$k$ denote the maximum size of the subspace dimension and the number of desired
approximate elliptic singular vectors, respectively. TriCG-DR($p,k$) incorporates a recycling
mechanism. For the first cycle, the recurrences for the iterates are the same as
that of TriCG. From the second and latter cycles, the recurrences for the
iterates of TriCG-DR($p,k$) are different from that of TriCG. We present
the recurrences for the second cycle, and the same recurrences holds for the
latter cycles.

At the end of the first cycle, we have the relation \eqref{SSY_relations2},
and it is used for the next cycle. We construct the new basis vector matrices
$\widetilde{\mathbf{U}}_{k+1}$ and $\widetilde{\mathbf{V}}_{k+1}$ via
\eqref{eq:new_basis_gssy-dr}. Continuing to generate the basis vectors by
gSSY-DR($p,k$) yields a new relation \eqref{new_deflated_decomposition}.
Let $\widetilde{\mathbf{x}}_k = \mathbf{x}_p$ and $\widetilde{\mathbf{y}}_k =
\mathbf{y}_p$ be the initial iterates of the second cycle, where $\mathbf{x}_p$
and $\mathbf{y}_p$ be the $p$th iterates obtained at the end of the first cycle.
The other $p-k$ iterates of the second cycle are
\[
    \begin{bmatrix}
        \widetilde{\mathbf{x}}_j \\ \widetilde{\mathbf{y}}_j
    \end{bmatrix} = 
    \begin{bmatrix}
        \widetilde{\mathbf{x}}_k \\ \widetilde{\mathbf{y}}_k
    \end{bmatrix} + 
    \widetilde{\mathbf{W}}_j \widetilde{\mathbf{z}}_j,\quad k+1 \le j \le p,
\]
where
\[
    \widetilde{\mathbf{W}}_{j} = 
    \begin{bmatrix}
        \widetilde{\mathbf{U}}_{j} \\ & \widetilde{\mathbf{V}}_{j}
    \end{bmatrix} \mathbf{P}_{j},\quad
    \mathbf{P}_j =
    \begin{bmatrix}
        \mathbf{e}_1 & \mathbf{e}_{j+1} & \cdots & \mathbf{e}_i &
        \mathbf{e}_{j+i} & \cdots & \mathbf{e}_j & \mathbf{e}_{2j}
    \end{bmatrix} \in \mathbb{R}^{2j \times 2j},
\] and $\widetilde{\mathbf{z}}_j$ satisfies the Galerkin condition $\widetilde{\mathbf{r}}_j \perp
\range(\widetilde{\mathbf{W}}_j)$.
Recall from \eqref{eq:rj_tricg} that 
\[
    \widetilde{\mathbf{r}}_k = \mathbf{r}_p = \mathbf{H} \mathbf{W}_{p+1}
    \bigl(\widetilde{\beta}_1 \mathbf{e}_{2p+1} + \widetilde{\gamma}_1
    \mathbf{e}_{2p+2} \bigr) = 
    \mathbf{H}
    \begin{bmatrix}
        \mathbf{u}_{p+1} & \mathbf{0} \\ \mathbf{0} & \mathbf{v}_{p+1}
    \end{bmatrix}
    \begin{bmatrix}
        \widetilde{\beta}_1 \\ \widetilde{\gamma}_1
    \end{bmatrix}
\]
where $\widetilde{\beta}_1 = -\beta_{p+1} \xi_{2p}$ and $\widetilde{\gamma}_1 =
-\gamma_{p+1} \xi_{2p-1}$. From
\eqref{eq:new_basis_gssy-dr}, we observe that 
\[
    \widetilde{\mathbf{U}}_j \mathbf{e}_{k+1} = \mathbf{u}_{p+1}, \quad
    \widetilde{\mathbf{V}}_j \mathbf{e}_{k+1} = \mathbf{v}_{p+1}, \quad
    j \ge k+1.
\]
Thus,
\[
    \widetilde{\mathbf{r}}_k = \mathbf{H} \widetilde{\mathbf{W}}_j
    (\widetilde{\beta}_1 \mathbf{e}_{2k+1} + \widetilde{\gamma}_1
    \mathbf{e}_{2k+2}),\quad j \ge k+1.
\] From \eqref{new_deflated_decomposition}, the corresponding
residual
\begin{align*}
    \widetilde{\mathbf{r}}_j 
    &= 
    \begin{bmatrix}
        \mathbf{b} \\ \mathbf{c}
    \end{bmatrix} -
    \begin{bmatrix}
        \mathbf{M} & \mathbf{A} \\ \mathbf{A}^\top & -\mathbf{N}
    \end{bmatrix}
    \biggl(
        \begin{bmatrix}
            \widetilde{\mathbf{x}}_k \\ \widetilde{\mathbf{y}}_k
        \end{bmatrix} + 
        \widetilde{\mathbf{W}}_j \widetilde{\mathbf{z}}_j
    \biggr) \\
    &= \widetilde{\mathbf{r}}_k - \mathbf{H} \widetilde{\mathbf{W}}_{j+1}
    \widetilde{\mathbf{S}}_{j+1, j} \widetilde{\mathbf{z}}_j \\
    &= \mathbf{H} \widetilde{\mathbf{W}}_{j+1} 
    \bigl(
        \widetilde{\beta}_1 \mathbf{e}_{2k+1} + \widetilde{\gamma}_1
        \mathbf{e}_{2k+2} - \widetilde{\mathbf{S}}_{j+1, j}
        \widetilde{\mathbf{z}}_j
    \bigr),
\end{align*} 
where 
\[
    \widetilde{\mathbf{S}}_{j+1, j} = 
    \begin{bmatrix}
        \widetilde{\bm{\Omega}}_1 	&	 		& 	&	\widetilde{\bm{\Psi}}_2 		\\
                                    & 	\ddots	&	&	\vdots					\\
                                    &			&	\ddots	& \widetilde{\bm{\Psi}}_{k+1} \\
        \widetilde{\bm{\Psi}}_2^\top	&	\dots	&	\widetilde{\bm{\Psi}}_{k+1}^\top	&	\widetilde{\bm{\Omega}}_{k+1} & \widetilde{\bm{\Psi}}_{k+2} \\
                                        & & & \widetilde{\bm{\Psi}}_{k+2}^\top & \widetilde{\bm{\Omega}}_{k+2} & \ddots \\
                                        & & & & \ddots & \ddots & \widetilde{\bm{\Psi}}_j \\
                                        & & & &            & \ddots     & \widetilde{\bm{\Omega}}_j \\
                                        & & & &            &			& \widetilde{\bm{\Psi}}_{j+1}^\top \\
    \end{bmatrix}
\]
with 
\[
    \widetilde{\bm{\Omega}}_j = 
    \begin{bmatrix}
        1 & \widetilde{\alpha}_j \\  \widetilde{\alpha}_j & -1
    \end{bmatrix},\quad
    \widetilde{\bm{\Psi}}_j = 
    \begin{bmatrix}
        0 & \widetilde{\gamma}_j \\ \widetilde{\beta}_j & 0
    \end{bmatrix}.
\]
The Galerkin condition $\widetilde{\mathbf{r}}_j \perp
\range(\widetilde{\mathbf{W}}_j)$ yields the subproblem
\[
    \widetilde{\mathbf{S}}_j \widetilde{\mathbf{z}}_j = \widetilde{\beta}_1
    \mathbf{e}_{2k+1} + \widetilde{\gamma}_1 \mathbf{e}_{2k+2},\quad 
    \widetilde{\mathbf{z}}_j =
    \begin{bmatrix}
        \xi_1 & \xi_2 & \cdots & \xi_{2j}
    \end{bmatrix}^\top,\quad j \geq k + 1,
\]
where $\widetilde{\mathbf{S}}_j$ is the leading $2j \times 2j$ submatrix of
$\widetilde{\mathbf{S}}_{j+1, j}$. Consider the $\mathrm{LDL}^\top$
factorization of $\widetilde{\mathbf{S}}_j = \mathbf{L}_j \mathbf{D}_j
\mathbf{L}_j^\top$, where
\[
    \mathbf{L}_j = 
    \begin{bmatrix}
        \bm{\Delta}_1 \\
        & \ddots \\
        &			& \ddots \\
        \bm{\Gamma}_2 	& \dots 	& \bm{\Gamma}_{k+1} 	& \bm{\Delta}_{k+1} \\
                        &  			&  				& \bm{\Gamma}_{k+2} 	& \bm{\Delta}_{k+2} \\
                        &			&				&				& \ddots 	& \ddots \\
                        &			&				&				&			& \bm{\Gamma}_j	& \bm{\Delta}_j \\
    \end{bmatrix}, \quad
    \bm{\Delta}_j = 
    \begin{bmatrix}
        1 \\ \delta_j & 1
    \end{bmatrix},\quad
    \bm{\Gamma}_j = 
    \begin{bmatrix}
        & \sigma_j \\ \eta_j & \lambda_j
    \end{bmatrix},
\]
and $\mathbf{D}_j = \mathrm{diag} \{d_1, d_2, \dots, d_{2j}\}$. By comparing
both sides of $\widetilde{\mathbf{S}}_j = \mathbf{L}_j \mathbf{D}_j
\mathbf{L}_j^\top$, we deduce that, for $\ell = 1, 2, \dots, k$,
\begin{subequations} \label{eq:update_Lk}
    \begin{align}
        d_{2\ell-1} &= 1, \\
        \delta_\ell &= \widetilde{\alpha}_\ell / d_{2\ell-1}, \\
        d_{2\ell} &= -1 - d_{2\ell-1} \delta_\ell^2, \\
        \eta_{\ell+1} &= \widetilde{\gamma}_{\ell+1} / d_{2\ell-1}, \\
        \sigma_{\ell+1} &= \widetilde{\beta}_{\ell+1} / d_{2\ell}, \\
        \lambda_{\ell+1} &= -d_{2\ell-1} \delta_\ell \eta_{\ell+1} / d_{2\ell},
    \end{align}
\end{subequations}
and 
\begin{subequations} 
    \begin{align}
        d_{2k+1} &= 1 - \sum_{\ell=1}^{k} d_{2\ell} \sigma_{\ell+1}^2, \\
        \delta_{k+1} &= (\widetilde{\alpha}_{k+1} - \sum_{\ell=1}^{k} d_{2\ell}
        \lambda_{\ell+1} \sigma_{\ell+1}) / d_{2k+1}, \\
        d_{2k+2} &= -1 - \sum_{\ell=1}^{k} (d_{2\ell-1} \eta_{\ell+1}^2 + d_{2\ell}
        \lambda_{\ell+1}^2) - d_{2k+1} \delta_{k+1}^2.
    \end{align}
\end{subequations}
For $j \ge k+2$, the recurrences for $\mathbf{L}_j$ and $\mathbf{D}_j$ are the same
as that of TriCG, i.e., \eqref{eq:ldl}.
We update the solution $\mathbf{p}_j = \begin{bmatrix} \pi_1 & \pi_2 & \cdots &
\pi_{2j} \end{bmatrix}^\top$ of $\mathbf{L}_j \mathbf{D}_j \mathbf{p}_j =
\widetilde{\beta}_1 \mathbf{e}_{2k+1} + \widetilde{\gamma}_1 \mathbf{e}_{2k+2}$
rather than computing $\widetilde{\mathbf{z}}_j$. The components of
$\mathbf{p}_j$ are updated by
\begin{equation} 
    \pi_1 = \dots = \pi_{2k} = 0, \quad
    \pi_{2k+1} = \widetilde{\beta}_1 / d_{2k+1}, \quad
    \pi_{2k+2} = (\widetilde{\gamma}_1 - \widetilde{\beta}_1 \delta_{k+1}) / d_{2k+2}.
\end{equation}
For $j \ge k+2$, the recurrences of $\pi_{2j-1}$ and $\pi_{2j}$ are the same as that
of TriCG, i.e., \eqref{eq:pk}. From $\mathbf{L}_j^\top \widetilde{\mathbf{z}}_j =
\mathbf{p}_j$, we obtain
\begin{equation} \notag
    \xi_{2j-1} = \pi_{2j-1} - \delta_{j} \pi_{2j}, \quad
    \xi_{2j} = \pi_{2j}, \quad j \ge k+1.
\end{equation}
Let
\[
    \mathbf{G}_j = \mathbf{W}_j \mathbf{L}_j^{-\top},\quad \mathbf{G}_j = 
    \begin{bmatrix}
        \mathbf{G}_j^x \\ \mathbf{G}_j^y
    \end{bmatrix} = 
    \begin{bmatrix}
        \mathbf{g}_1^x & \dots & \mathbf{g}_{2j}^x \\
        \mathbf{g}_1^y & \dots & \mathbf{g}_{2j}^y
    \end{bmatrix}.
\]
Then, we have the recurrences, for $\ell = 1,\dots, k$, 
\begin{subequations}
    \begin{alignat}{2}
        \mathbf{g}_{2\ell-1}^x &= \widetilde{\mathbf{u}}_\ell,\quad &
        \mathbf{g}_{2\ell-1}^y &= \mathbf{0}, \\
        \mathbf{g}_{2\ell}^x &= -\delta_\ell \widetilde{\mathbf{u}}_\ell, \quad &
        \mathbf{g}_{2\ell}^y &= \widetilde{\mathbf{v}}_\ell,
    \end{alignat}
\end{subequations}
and 
\begin{subequations} \label{eq:update_gk}
    \begin{align}
        \mathbf{g}_{2k+1}^x &= \widetilde{\mathbf{u}}_{k+1} + \sum_{\ell=1}^{k}
        \sigma_{\ell+1} \delta_{\ell} \widetilde{\mathbf{u}}_\ell, \\
        \mathbf{g}_{2k+1}^y &= -\sum_{\ell=1}^{k} \sigma_{\ell+1}
        \widetilde{\mathbf{v}}_\ell, \\
        \mathbf{g}_{2k+2}^x &= -\delta_{k+1} \mathbf{g}_{2k+1}^x -
        \sum_{\ell=1}^{k} (\eta_{\ell+1} - \lambda_{\ell+1} \delta_\ell)
        \widetilde{\mathbf{u}}_\ell, \\
        \mathbf{g}_{2k+2}^y &= \widetilde{\mathbf{v}}_{k+1} - \delta_{k+1}
        \mathbf{g}_{2k+1}^y - \sum_{\ell=1}^{k} \lambda_{\ell+1}
        \widetilde{\mathbf{v}}_\ell.
    \end{align}
\end{subequations}
For $j \ge k+2$, the recurrences of $\mathbf{G}_j$ are the same as that of
TriCG, i.e., \eqref{eq:g}. Then, for $j \ge k+1$, the
approximate solution is updated by
\begin{subequations}\label{eq:update_2xy}\begin{align}
    \widetilde{\mathbf{x}}_{j} &= \widetilde{\mathbf{x}}_k +
    \mathbf{G}_{j}^x \mathbf{p}_{j} = \widetilde{\mathbf{x}}_{j-1} + \pi_{2j-1}
    \mathbf{g}_{2j-1}^x + \pi_{2j} \mathbf{g}_{2j}^x, \\
    \widetilde{\mathbf{y}}_{j} &= \widetilde{\mathbf{y}}_k +
    \mathbf{G}_{j}^y \mathbf{p}_{j} = \widetilde{\mathbf{y}}_{j-1} + \pi_{2j-1}
    \mathbf{g}_{2j-1}^y + \pi_{2j} \mathbf{g}_{2j}^y.
\end{align}
\end{subequations}
Obviously, the recurrences \eqref{eq:update_Lk}--\eqref{eq:update_gk}
coincide with that of TriCG if $k = 0$.

When the desired approximate elliptic singular vectors have reached sufficient accuracy, but the approximate solution to the SQD linear system has not yet attained the specified precision, in order to save computational cost, we stop updating the approximate elliptic singular triplets and use the recurrence \eqref{eq:update_2xy} to compute the approximate solution until the desired precision is achieved. Specifically, TriCG-DR($p,k$) has two distinct stages: the restarting stage and the non-restarting stage.
\begin{enumerate}
   \item \emph{The restarting stage:} When the desired $k$ approximate elliptic singular
       triplets do not satisfy the criteria \eqref{eq:svd_stop_criteria} and
       the residual norm does not reduce to the given tolerance, we employ gSSY-DR($p,k$) to update the approximate elliptic singular
       triplets and use \eqref{eq:update_2xy} with $k+1\leq j\leq p$ to compute the approximate solution.
   \item \emph{The non-restarting stage:} If the desired $k$ approximate elliptic
       singular triplets satisfy the criteria \eqref{eq:svd_stop_criteria} but
       the residual norm has not yet reduced to the given tolerance, we use \eqref{eq:update_2xy} with $j\geq k+1$ to compute the approximate solution until either the user-defined maximum number of iteration is exceeded or an sufficient accurate approximate solution is obtained. 
\end{enumerate}
We summarize the implementations of TriCG-DR($p,k$) in \Cref{alg:tricgdr}. Note that some reorthogonalization steps (see lines 16 and 18) are used to control the rounding errors.

\begin{algorithm}[htbp]
    \caption{TriCG with deflated restarting: TriCG-DR($p,k$)}
    \label{alg:tricgdr}

    \KwIn{$\mathbf{A}$, $\mathbf{M}$, $\mathbf{N}$, $\mathbf{b}$, $\mathbf{c}$.
        $p$--number of maximum subspace dimension; 
        $k$--number of desired elliptic singular triplets;
        \texttt{maxcycle}--maximum number of cycles;
        \texttt{maxit}--maximum number of iterations for the non-restarting
        stage;
        \texttt{tol}--tolerance for approximate solutions; 
        $\varepsilon_\mathrm{svd}$--tolerance for approximate elliptic singular triplets.
    }

    \KwOut{Approximate solution $\mathbf{x}$ and $\mathbf{y}$}

    $\beta_1 \mathbf{M} \mathbf{u}_1 = \mathbf{b}$, $\gamma_1 \mathbf{N}
    \mathbf{v}_1 = \mathbf{c}$ \;

    $\mathbf{U}_1 = [\mathbf{u}_1]$, $\mathbf{V}_1 = [\mathbf{v}_1]$\;

    $k_{\rm aug} = k$, $k = 0$ 
    \tcc*[r]{Set the number of deflation vectors to zero at the first cycle}

    \texttt{conv\_sv = false}, 
    $\mathtt{inner} = p$, $\mathbf{x}_0 =
    \mathbf{0}$, $\mathbf{y}_0 = \mathbf{0}$ 
    \tcc*[r]{\texttt{conv\_sv} checks the convergence of elliptic singular
    values}

    \For(\tcc*[f]{Outer cycle}){$\mathtt{outerit} = 1, 2, \dots, \mathtt{maxcycle}$}
    {
        Compute $\alpha_{k+1}$, $\beta_{k+2}$, $\gamma_{k+2}$,
        $\mathbf{u}_{k+2}$, $\mathbf{v}_{k+2}$ via lines
        \ref{alg_line:gssy_outer_q}--\ref{alg_line:gssy_outer_v} of
        \Cref{alg:approx_svd}\;

        Compute $d_{2k+1}$, $d_{2k+2}$, $\delta_{k+1}$, $\pi_{2k+1}$,
        $\pi_{2k+2}$, $\mathbf{g}_{2k+1}^x$, $\mathbf{g}_{2k+1}^y$,
        $\mathbf{g}_{2k+2}^x$, $\mathbf{g}_{2k+2}^y$ via
        \eqref{eq:update_Lk}--\eqref{eq:update_gk}\;

        $\mathbf{x}_{k+1} = \mathbf{x}_k + \pi_{2k+1} \mathbf{g}_{2k+1}^x +
        \pi_{2k+2} \mathbf{g}_{2k+2}^x$\;

        $\mathbf{y}_{k+1} = \mathbf{y}_k + \pi_{2k+1} \mathbf{g}_{2k+1}^y +
        \pi_{2k+2} \mathbf{g}_{2k+2}^y$\;

        $\xi_{2k+1} = \pi_{2k+1} - \delta_{k+1} \pi_{2k+2},\ \xi_{2k+2} =
        \pi_{2k+2}$\;

        $\|\mathbf{r}_{k+1}\|_{\mathbf{H}^{-1}} = (\gamma_{k+2}^2 \xi_{2k+1}^2 +
        \beta_{k+2}^2 \xi_{2k+2}^2)^{1/2}$\;

        \lIf(
            \tcc*[f]{Update $\mathbf{T}$ only when the elliptic singular values
            do not converge}
        ){$\mathtt{!conv\_sv}$}{$\mathbf{T}_{k+1,k+1} = \alpha_{k+1}$}

        \For(
            \tcc*[f]{Inner iteration}
        ){$j = k+2, k+3, \dots, \mathtt{inner}$}
        {
            Compute $\alpha_j$, $\mathbf{Mu}$, $\mathbf{Nv}$ via lines
            \ref{alg_line:gssy_inner_q}--\ref{alg_line:gssy_inner_v} of
            \Cref{alg:approx_svd} \;

            \eIf{$\mathtt{!conv\_sv}$}
            {
                Update $\mathbf{T}$, and reorthogonalize $\mathbf{Mu}$ and
                $\mathbf{Nv}$ via lines
                \ref{alg_line:gssy_update_T}--\ref{alg_line:gssy_reorth_v} of
                \Cref{alg:approx_svd}
            }{
                Only reorthogonalize $\mathbf{Mu}$ and $\mathbf{Nv}$ with respect to the
                converged elliptic singular \hspace*{-1ex}vectors: $\mathbf{Mu} = (\mathbf{I}
                - \mathbf{MU}_k \mathbf{U}_k^\top) \mathbf{Mu}$, $\mathbf{Nv} =
                (\mathbf{I} - \mathbf{NV}_k \mathbf{V}_k^\top) \mathbf{Nv}$ 
            }

            $\beta_{j+1} = \|\mathbf{u}\|_{\mathbf{M}}$, $\gamma_{j+1} =
            \|\mathbf{v}\|_{\mathbf{N}}$\;

            $\mathbf{u}_{j+1} = \mathbf{u} / \beta_{j+1}$, $\mathbf{v}_{j+1} =
            \mathbf{v} / \gamma_{j+1}$\;
            
            Compute $\eta_j$, $\sigma_j$, $\lambda_j$, $d_{2j-1}$, $\delta_j$,
            $d_{2j}$, $\pi_{2j-1}$, $\pi_{2j}$, $\mathbf{g}_{2j-1}^x$,
            $\mathbf{g}_{2j-1}^y$, $\mathbf{g}_{2j}^x$, $\mathbf{g}_{2j}^y$ via
            \eqref{eq:ldl}--\eqref{eq:g}\;

            $\mathbf{x}_{j} = \mathbf{x}_{j-1} + \pi_{2j-1} \mathbf{g}_{2j-1}^x
            + \pi_{2j} \mathbf{g}_{2j}^x$\;

            $\mathbf{y}_{j} = \mathbf{y}_{j-1} + \pi_{2j-1} \mathbf{g}_{2j-1}^y
            + \pi_{2j} \mathbf{g}_{2j}^y$\;

            $\xi_{2j-1} = \pi_{2j-1} - \delta_j \pi_{2j},\ \xi_{2j} = \pi_{2j}$\;

            $\|\mathbf{r}_j\|_{\mathbf{H}^{-1}} = (\gamma_{j+1}^2 \xi_{2j-1}^2 +
            \beta_{j+1}^2 \xi_{2j}^2)^{1/2}$\;

            \lIf{$\|\mathbf{r}_j\|_{\mathbf{H}^{-1}} \le \mathtt{tol}$}
            {\textbf{stop}}
        }
        \lIf{$\mathtt{conv\_sv}$}{\textbf{stop}}\tcc*[r]{The maximum number of iterations is exceeded, but the
         residual norm fails to reduce to the given tolerance during the non-restarting
         stage}
         
        $k = k_{\rm aug}$ \tcc*[r]{Recover dimension of augmentation to $k$}
        $\beta_1 = -\beta_{p+1} \xi_{2p}$, $\gamma_1 = -\gamma_{p+1} \xi_{2p-1}$
        \;

        Compute the SVD of $\mathbf{T}$, and store the $k$ desired singular
        triplets in $\widehat{\mathbf{U}}_k,\ \bm{\Sigma}_k$ and
        $\widehat{\mathbf{V}}_k$\;

        Let $\mathbf{U}_k = \mathbf{U}_p \widehat{\mathbf{U}}_k$ and
        $\mathbf{V}_k = \mathbf{V}_p \widehat{\mathbf{V}}_k$\;

        Check the number of converged elliptic singular triplets
        \texttt{num\_conv\_sv} via lines
        \ref{line:gssy_check_converged_num1}--\ref{line:gssy_check_converged_num2}
        of \hspace*{-1ex}\Cref{alg:approx_svd}\;
        \If(\tcc*[f]{Don't restart if the $k$ approximate elliptic singular values
            converge}){$\mathtt{num\_conv\_sv} = k$}
        {
            \texttt{conv\_sv = true}, \texttt{inner = maxit}
        }

        $\mathbf{U}_{k+1} = \begin{bmatrix} \mathbf{U}_k & \mathbf{u}_{p+1}
            \end{bmatrix}$, $\mathbf{V}_{k+1} = \begin{bmatrix} \mathbf{V}_k &
        \mathbf{v}_{p+1} \end{bmatrix}$\; \smallskip

        $\mathbf{T}_{1:k,1:k} = \bm{\Sigma}_k$, $\mathbf{T}_{1:k,k+1}
        = \gamma_{p+1} \widehat{\mathbf{U}}_k^\top \mathbf{e}_p$,
        $\mathbf{T}_{k+1,1:k} = \beta_{p+1} \mathbf{e}_p^\top
        \widehat{\mathbf{V}}_k$\;
    }
\end{algorithm}

\section{Multiple right-hand sides}\label{sec:mrhs}
We now consider SQD linear systems with multiple right-hand sides. We use the approximate elliptic singular vector matrices $\mbf U_k$ and $\mbf V_k$ generated in TriCG-DR($p,k$) for the solution of the linear system with the first right-hand side to deflate elliptic singular values from the solution of the subsequent right-hand sides. 

Let $\bem\mbf b_i^\top & \mbf c_i^\top\eem^\top$ denote the $i$th  ($i\ge 2$) right-hand side. First, we compute an initial guess  
\begin{align}
    \begin{bmatrix}
        \mathbf{x}_0 \\ \mathbf{y}_0
    \end{bmatrix}
    &= 
    \begin{bmatrix}
        \mathbf{U}_k \\ & \mathbf{V}_k
    \end{bmatrix}
    \biggl(
        \begin{bmatrix}
            \mathbf{U}_k^\top \\ & \mathbf{V}_k^\top
        \end{bmatrix}   
        \begin{bmatrix}
            \mathbf{M} & \mathbf{A} \\ \mathbf{A}^\top & -\mathbf{N}
        \end{bmatrix}
        \begin{bmatrix}
            \mathbf{U}_k \\ & \mathbf{V}_k
        \end{bmatrix}
    \biggr)^{-1}
    \begin{bmatrix}
        \mathbf{U}_k^\top \\ & \mathbf{V}_k^\top
    \end{bmatrix}   
    \begin{bmatrix}
        \mathbf{b}_i \\ & \mathbf{c}_i
    \end{bmatrix} \notag \\ 
    &=
    \begin{bmatrix}
        \mathbf{U}_k \\ & \mathbf{V}_k
    \end{bmatrix}
    \begin{bmatrix}
        \mathbf{I}_k & \mathbf{T}_k \\ \mathbf{T}_k^\top & -\mathbf{I}_k
    \end{bmatrix}^{-1}
    \begin{bmatrix}
        \mathbf{U}_k^\top \\ & \mathbf{V}_k^\top
    \end{bmatrix}   
    \begin{bmatrix}
        \mathbf{b}_i \\ \mathbf{c}_i
    \end{bmatrix}. \label{eq:x0}
\end{align} 
    Define 
    \[
        \begin{bmatrix}
            \mathbf{d}_x \\ \mathbf{d}_y
        \end{bmatrix}
        = 
        \begin{bmatrix}
            \mathbf{I}_k & \mathbf{T}_k \\ \mathbf{T}_k^\top & -\mathbf{I}_k
        \end{bmatrix}^{-1}
        \begin{bmatrix}
            \mathbf{U}_k^\top \\ & \mathbf{V}_k^\top
        \end{bmatrix}   
        \begin{bmatrix}
            \mathbf{b}_i \\ \mathbf{c}_i
        \end{bmatrix}.
    \]
   Then the TriCG method is used for 
\begin{equation} \label{eq:dtricg_sys}
    \begin{bmatrix}
        \mathbf{M} & \mathbf{A} \\ \mathbf{A}^\top & -\mathbf{N}
    \end{bmatrix}
    \begin{bmatrix}
        \mathbf{x} - \mathbf{x}_0 \\ \mathbf{y} - \mathbf{y}_0
    \end{bmatrix} = 
    \begin{bmatrix}
        \mathbf{r}_0^x \\ \mathbf{r}_0^y
    \end{bmatrix},
\end{equation} with $$ 
    \begin{bmatrix}
        \mathbf{r}_0^x \\ \mathbf{r}_0^y
    \end{bmatrix}=\begin{bmatrix}
        \mathbf{b}_i \\ \mathbf{c}_i
    \end{bmatrix} - 
    \begin{bmatrix}
        \mathbf{MU}_{k+1} \\ & \mathbf{NV}_{k+1}
    \end{bmatrix}
    \begin{bmatrix}
        \mathbf{I}_{k+1,k} & \mathbf{T}_{k+1,k} \\
        \mathbf{T}_{k,k+1}^\top & -\mathbf{I}_{k+1,k}
    \end{bmatrix}
    \begin{bmatrix}
        \mathbf{d}_x \\ \mathbf{d}_y
    \end{bmatrix}.$$
We call the resulting method deflated TriCG (D-TriCG) since it deflates out
partial spectral information before applying TriCG. It is closely related to the
deflated CG method in \cite{abdelrehim2010deflated}. 

We provide a concise summary of the TriCG-DR+D-TriCG framework for solving SQD linear systems with multiple right-hand sides, as outlined below:
    \begin{enumerate}
        \item For the first right-hand side, TriCG-DR is used to compute the solution and generate the desired $k$ approximate elliptic singular vectors. 
        \item For the subsequent right-hand sides, define the initial guess by
            \eqref{eq:x0}, and then solve \eqref{eq:dtricg_sys} by TriCG.     \end{enumerate}
We would like to point out that reorthogonalizing the computed basis vectors in TriCG against to the $k$ approximate elliptic singular vectors generated during the solution process for the first right-hand side is useful for controlling the rounding errors. 

\section{Numerical experiments} \label{sec:experiment}

In this section, we compare the performance of TriCG-DR and TriCG. Both
algorithms stop as soon as they either reach the maximum number of iterations or
the residual norm $\|\mathbf{r}_k\|_{\mathbf{H}^{-1}}$ falls below the tolerance level $\mathtt{tol}$. All experiments are performed using MATLAB R2025b on a MacBook Air equipped with an Apple M3 chip, 16 GB of memory, and running macOS Tahoe 26.1. The MATLAB scripts to reproduce the results in this section are available at https://github.com/kuidu/tricgdr. For all experiments, the residual norms are computed exactly for a fair comparison.  

We begin with a synthetic example where $\mathbf{M} = \mathbf{I}$,
$\mathbf{N} = \mathbf{I}$, and $\mathbf{A}$ is a diagonal matrix of size $2060 \times 2060$
generated using the following MATLAB script:
\begin{verbatim}
    A = [linspace(0, 800, 2000), linspace(1e3, 1e5, 60)]';
    m = length(A); n = m;
    A = spdiags(A, 0, m, n);
\end{verbatim}
The right-hand vector is generated randomly. It is clear that $\mathbf{A}$ has $60$ large singular values lying in the interval
$[10^3, 10^5]$. In this experiment, we select the $k$ largest singular triplets as the desired ones and investigate the impact of varying $k$ on the convergence behavior of TriCG-DR. The parameters are configured as follows: $k$ is sequentially set to $20, 40$, and $60$, with $p = k + 80$, the convergence tolerance for approximate solutions $\mathtt{tol}$ is set to $10^{-8}$, the convergence tolerance for approximate singular  triplets $\varepsilon_\mathrm{svd}$ is set to $10^{-10}$, the maximum number of cycle $\mathtt{maxcycle}$ is set to $80$, and the maximum number of iterations $\mathtt{maxit}$ is set to $40000$. The convergence histories of TriCG and TriCG-DR($p,k$) are
displayed in \Cref{fig:fig1}. 
 For all tested values of $k$, TriCG-DR consistently demonstrates superior performance compared to TriCG. Notably, as $k$ increases, TriCG-DR exhibits accelerated convergence, highlighting the benefit of incorporating a sufficient large deflation subspace into the algorithm.

\begin{figure}[htbp]
    \centering
    \includegraphics[width=0.48\linewidth]{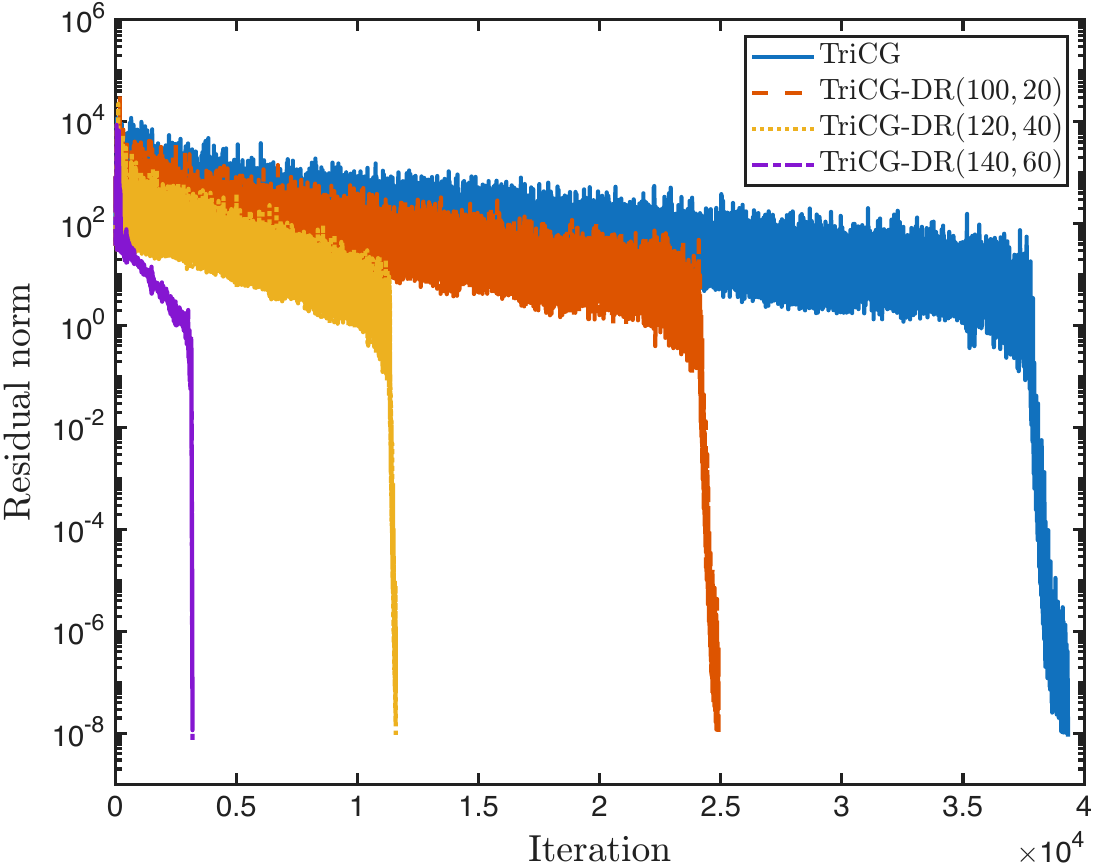}
    \caption{The convergence histories of TriCG and TriCG-DR under varying dimensionality of deflation subspaces.}
\label{fig:fig1}
\end{figure}

In the second experiment, we employ square matrices from the SuiteSparse Matrix Collection
\cite{davis2011university} to serve as the matrix $\mathbf{A}$ in \eqref{eq:problem} and set $\mbf M=\mbf I$ and $\mbf N=\mbf I$. The right-hand vectors $\mathbf{b} =
\mathbf{e} / \sqrt{m}$ and $\mathbf{c} = \mathbf{e} / \sqrt{n}$, where
$\mathbf{e} = \begin{bmatrix} 1 & 1 & \cdots & 1 \end{bmatrix}^\top$. The parameters are configured as follows: $\mathtt{tol} = 10^{-8}$, $\varepsilon_\mathrm{svd} = 10^{-10}$, $\mathtt{maxcycle} = 10$, and $\mathtt{maxit} = 80000$.  We select the $k$ largest singular triplets as the desired ones. The matrix specifications, computational runtimes of TriCG and TriCG-DR, along with the selected values of the TriCG-DR parameters $p$ and $k$, are presented in \Cref{tab:tab1}. The convergence histories of TriCG and TriCG-DR are displayed in \Cref{fig:fig2}. Notably, TriCG-DR demonstrates a significant reduction in iteration counts compared to TriCG, achieving an approximate $1.7\times$ to $3.8\times$ speedup in CPU time.

\begin{table}[htbp]
    \centering
    \caption{The information of square matrices from the SuiteSparse Matrix Collection, runtime of TriCG and TriCG-DR, and parameters $p$ and $k$.}
    \label{tab:tab1}
    \begin{tabular}{|r|r|r|r|rrr|}
        \hline
        \multirow{2}{*}{Matrix} & \multirow{2}{*}{Size} & \multirow{2}{*}{Nnz} &
        TriCG & \multicolumn{3}{r|}{TriCG-DR} \\ \cline{4-7} 
                                &  &  & Time(s) & \multicolumn{1}{r|}{Time(s)} &
        \multicolumn{1}{r|}{$p$} & $k$ \\ \hline
        \texttt{gupta3} & 16783 & 9323427 & 17.55 & \multicolumn{1}{r|}{7.61} &
        \multicolumn{1}{r|}{240} & 120 \\ \hline
        \texttt{g7jac060sc} & 17730 & 183325 & 16.82 &
        \multicolumn{1}{r|}{10.10} & \multicolumn{1}{r|}{60} & 20 \\ \hline
        \texttt{rajat27} & 20640 & 97353 & 24.70 & \multicolumn{1}{r|}{6.42} &
        \multicolumn{1}{r|}{100} & 40 \\ \hline
        \texttt{TSOPF\_RS\_b300\_c2} & 28338 & 2943887 & 30.48 &
        \multicolumn{1}{r|}{17.64} & \multicolumn{1}{r|}{120} & 40 \\ \hline
    \end{tabular}
\end{table}

\begin{figure}[htbp]
    \centering
    \includegraphics[width=0.48\linewidth]{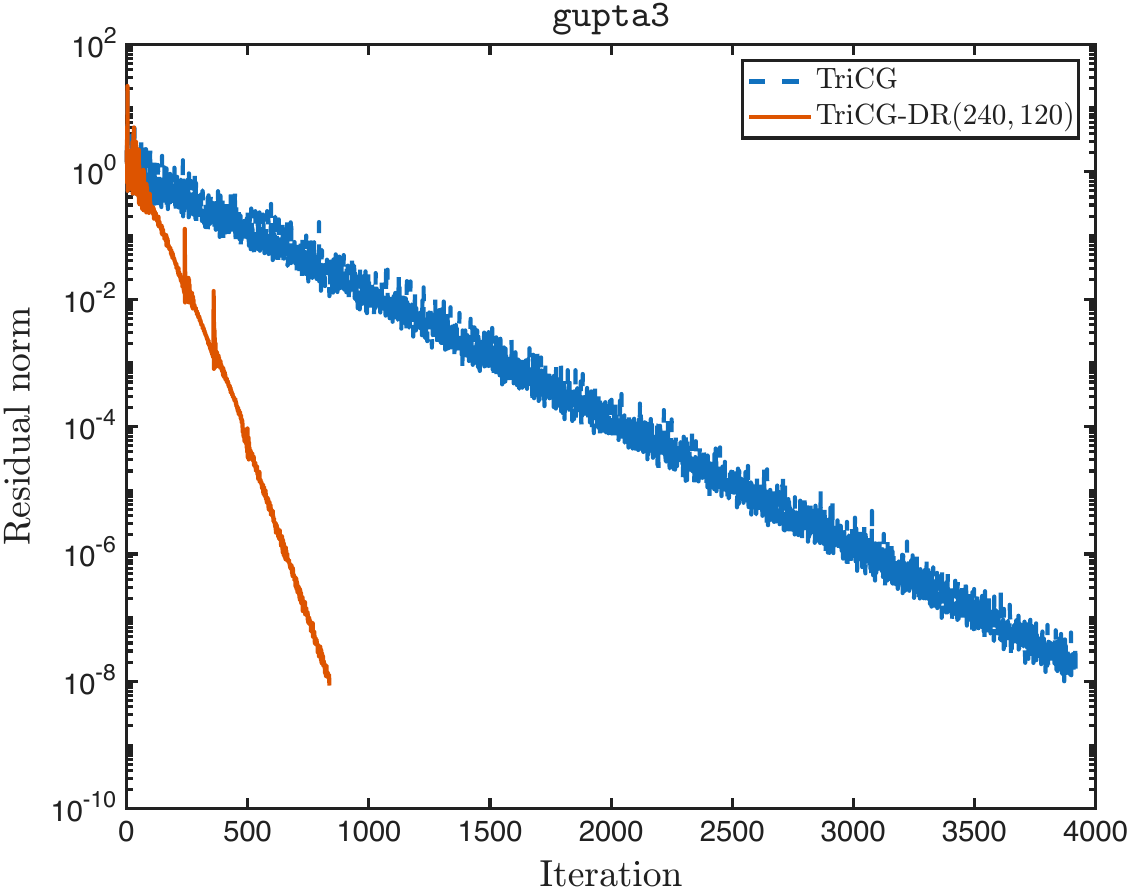}
    \includegraphics[width=0.48\linewidth]{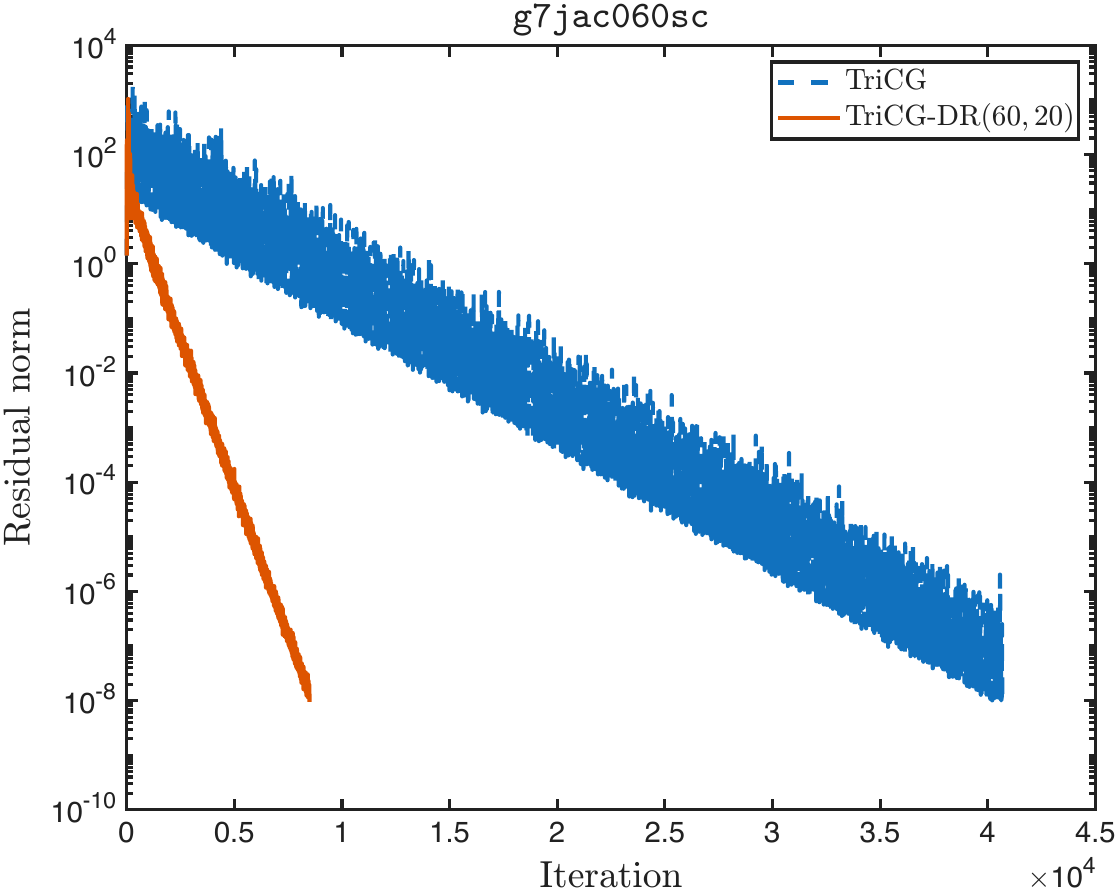}
    \includegraphics[width=0.48\linewidth]{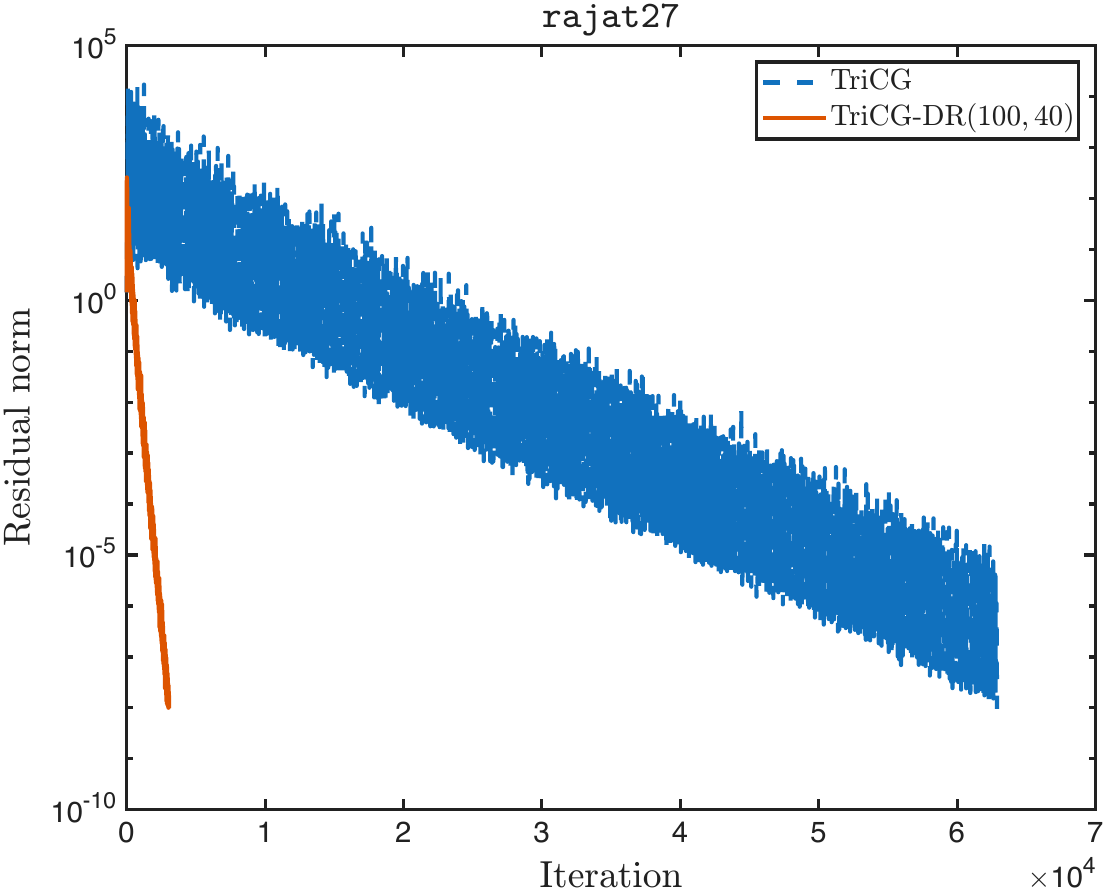}
    \includegraphics[width=0.48\linewidth]{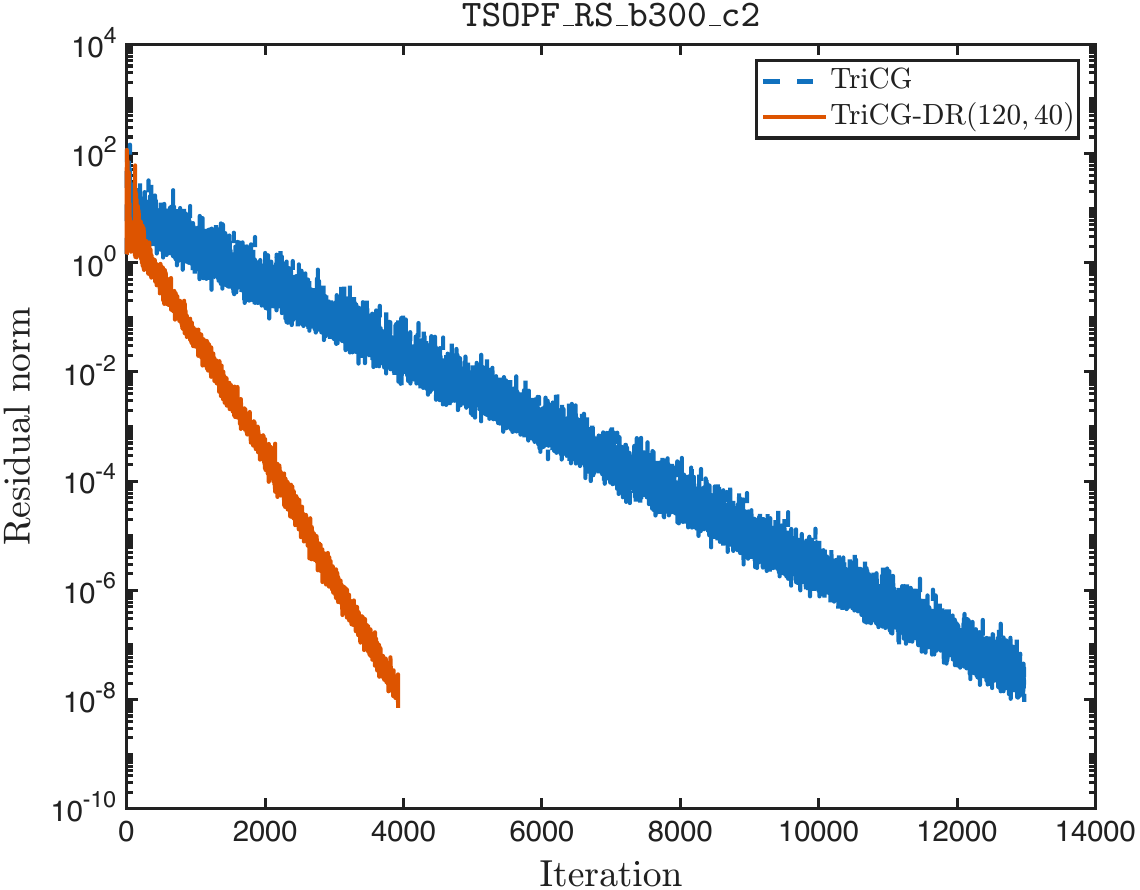}
    \caption{The convergence histories of TriCG and TriCG-DR on the problems
    \texttt{gupta3}, \texttt{g7jac060sc}, \texttt{rajat27}, and
    \texttt{TSOPF\_RS\_b300\_c2}.}
    \label{fig:fig2}
\end{figure}

In the third experiment, we solve SQD linear systems with multiple right-hand
sides using TriCG and TriCG-DR+D-TriCG. 
We set $\mathbf{M} = \mathbf{I}$, $\mathbf{N} = \mathbf{I}$, and use a diagonal matrix $\mathbf{A}$ generated via the following MATLAB script:
\begin{verbatim}
    A = [linspace(0, 100, 1960), linspace(1000, 1020, 40)]';
    m = length(A); n = m;
    A = spdiags(A, 0, m, n);
\end{verbatim} 
It is clear that $\mathbf{A}$ has $40$ large singular values clustered in the interval $[1000, 1020]$. The right-hand sides are randomly generated. 
We investigate the impact of deflation subspace dimensionality on the convergence behavior of D-TriCG. The approximate singular vectors corresponding to the $k$ largest singular values are computed using gSSY-DR($p,k$) with $p=k+40$ and $\varepsilon_\mathrm{svd}=10^{-12}$. When $k = 20$ and $k = 40$, gSSY-DR requires only $3$ and $2$ cycles respectively, achieving singular triplet  errors of $1.76 \times 10^{-13}$ and $1.43 \times 10^{-14}$. For both TriCG and D-TriCG, we set $\mathtt{maxit} = 4000$ and $\mathtt{tol}=10^{-8}$. 
The convergence histories of TriCG and D-TriCG are displayed in
\Cref{fig:fig3_3}. As $k$ increases, D-TriCG demonstrates progressively accelerated convergence. For $k=40$, D-TriCG has a significant performance improvement over the $k=20$ case. This improvement stems from the fact that when $k=20$, the deflation subspace fails to fully eliminate the influence of the cluster of 40 largest singular values. 

\begin{figure}[htbp]
    \centering
    \includegraphics[width=0.48\linewidth]{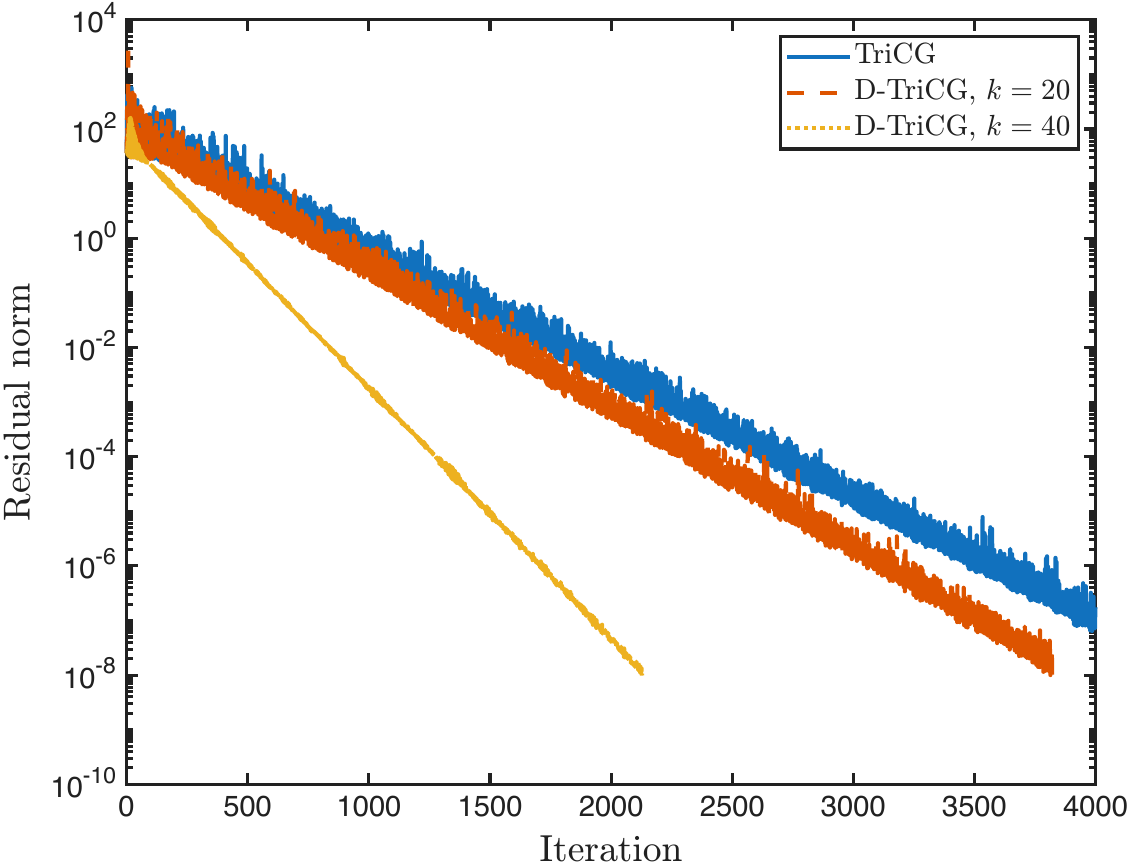}
    \caption{The convergence histories of TriCG and D-TriCG with deflation subspaces of different dimensions.}
    \label{fig:fig3_3}
\end{figure}

At the end of this section, we consider the $Q_1$-$Q_1$ finite element discretization of the unsteady incompressible Stokes equation as in
\cite[Example 3.4]{gueduecue2022non}, which leads to systems of the form
\[
    \begin{bmatrix}
        \bm{\mathcal{M}} & \mathbf{0} \\
        \mathbf{0} & \mathbf{0}
    \end{bmatrix}
    \begin{bmatrix}
        \dot{\mathbf{v}} \\
        \dot{\mathbf{p}}
    \end{bmatrix} = 
    \biggl(
    \begin{bmatrix}
        \mathbf{A}_S & \mathbf{B} \\
        -\mathbf{B}^\top & \mathbf{0}
    \end{bmatrix} - 
    \begin{bmatrix}
        -\mathbf{A}_H & \mathbf{0} \\
        \mathbf{0}^\top & -\mathbf{C}
    \end{bmatrix}
    \biggr)
    \begin{bmatrix}
        \mathbf{v} \\ \mathbf{p}
    \end{bmatrix} + 
    \begin{bmatrix}
        \mathbf{f} \\ \mathbf{g}
    \end{bmatrix}.
\]
The matrices are generated via the IFISS software package \cite{elman2007algorithm} on the benchmark problem
\texttt{channel\_domain}. With the grid parameter set to $8$, we obtain the matrix
$\mathbf{B}$ of size $132098 \times 66049$. For the Stokes problem, $\mathbf{A}_S = \mathbf{0}$, $-\mathbf{A}_H$ is symmetric positive definite, and the stabilization term $-\mathbf{C}$ is symmetric positive semidefinite. Here, we add a small perturbation $10^{-10} \mathbf{I}$ to $-\mathbf{C}$ so that it is positive definite. By left-multiplying $-\mathbf{I}$ with the second block of the linear systems arising from the implicit Euler discretization on a uniform time grid, we obtain a sequence of SQD linear systems of the form: 
\[
    \begin{bmatrix}
        \mathbf{M} & \mathbf{A} \\
        \mathbf{A}^\top & -\mathbf{N}
    \end{bmatrix}
    \begin{bmatrix}
        \mathbf{v}_{i+1} \\ \mathbf{p}_{i+1}
    \end{bmatrix} = 
    \begin{bmatrix}
        \tau \mathbf{f} + \bm{\mathcal{M}} \mathbf{v}_i \\
        -\tau \mathbf{g}
    \end{bmatrix},\ 
    \mathbf{M} = \bm{\mathcal{M}} - \tau \mathbf{A}_H,\  \mathbf{N} = -\tau \mathbf{C},\ \mathbf{A} = -\tau \mathbf{B}, 
\]
where $\tau$ is the time step size. We 
compare the performance of TriCG and TriCG-DR+D-TriCG for solving $10$
successive SQD linear systems. We set the parameters
\[
    \tau = 0.1,\ \mathtt{tol} = 10^{-10},\ \varepsilon_\mathrm{svd}= 10^{-10},\ k = 100,\ p = 200,\ \mathtt{maxcycle} = 10,\ \mathtt{maxit} = 2000.
\]
The first system is solved by TriCG-DR($200,100$). For this numerical example, the approximate elliptic singular
vectors corresponding to the $k=100$ largest elliptic singular values converge at the end of the $3$rd cycle with the error $4.26 \times
10^{-15}$. The D-TriCG method are employed for the subsequent SQD linear systems.
\Cref{fig:fig3_4} displays the convergence histories of TriCG and TriCG-DR+D-TriCG,
while \Cref{tab:tab2} presents the corresponding computational runtime. The proposed TriCG-DR+D-TriCG method achieves significant reductions in both iteration count and wall-clock time by leveraging spectral information from the approximate elliptic singular vectors.

\begin{figure}[htbp]
    \centering
    \includegraphics[width=0.48\linewidth]{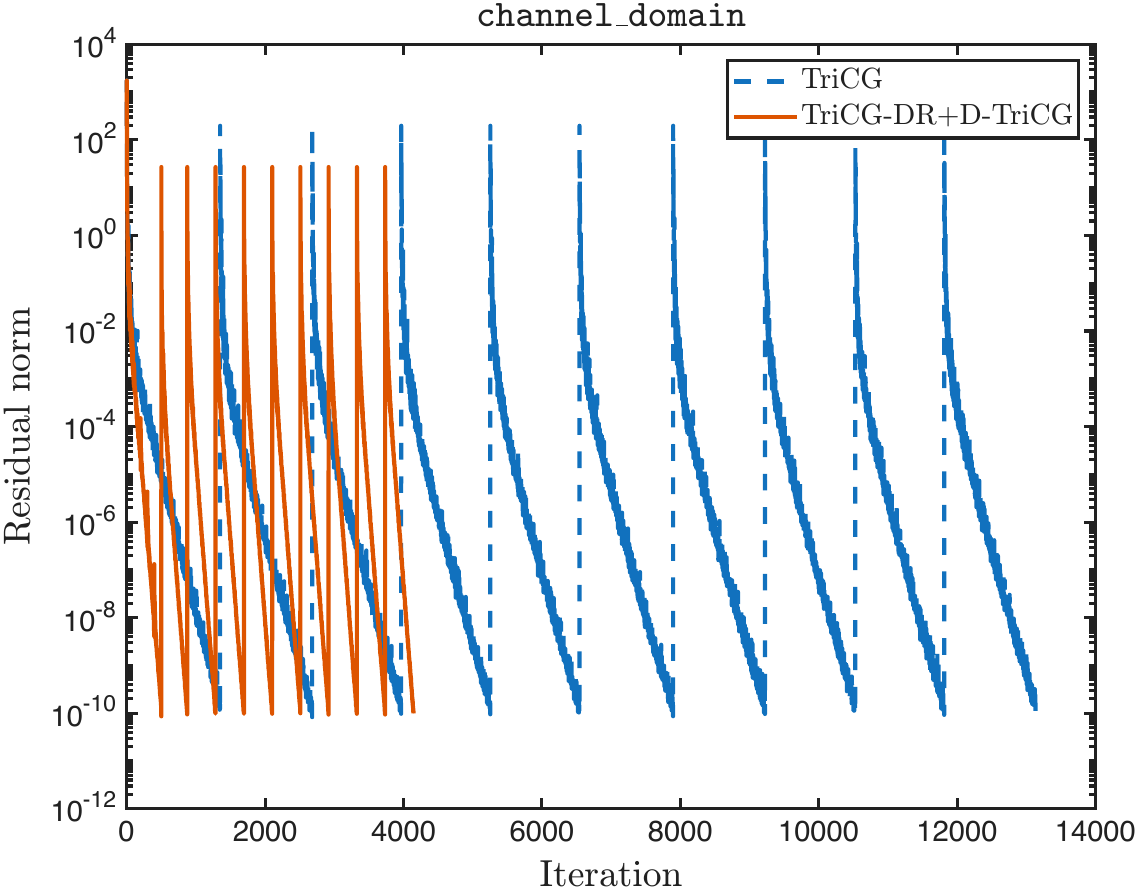}
    \caption{The convergence histories of TriCG and TriCG-DR+D-TriCG for
    10 right-hand sides on the problem \texttt{channel\_domain}.}
    \label{fig:fig3_4}
\end{figure}

\begin{table}[htbp]
    \centering
    \caption{CPU time of TriCG and TriCG-DR+D-TriCG on the problem \texttt{channel\_domain}.}
    \label{tab:tab2}
    \begin{tabular}{@{}rrr@{}}
        \toprule
        & TriCG & TriCG-DR+D-TriCG \\ \midrule
        Time(s) & $163.42$ & $108.85$ \\ \bottomrule
    \end{tabular}
\end{table}

\section{Concluding remarks and future work} 
\label{sec:conclusion}
When the off-diagonal block of the SQD matrix contains a substantial number of large elliptical singular values, TriCG exhibits relatively slow convergence. To address this issue, deflation techniques aimed at mitigating the impact of these large elliptical singular values can be utilized to accelerate the convergence of TriCG. Given the exact elliptic singular value decomposition (ESVD) of matrix $\mathbf{A}$, we demonstrate that the deflated system \eqref{eq:deflated2} can be solved via TriCG by merely modifying the right-hand side. However, in practical computational scenarios, the exact ESVD is usually not available. To address this limitation, we proposed the gSSY-DR method for computing several approximate elliptic singular triplets. Combining TriCG and gSSY-DR, we proposed TriCG-DR for solving SQD linear systems. Numerical experiments demonstrate that when the off-diagonal matrix $\mbf A$ contains a substantial number of large elliptic singular values, TriCG-DR achieves a significant reduction in iteration count and achieves marked acceleration in CPU runtime compared to TriCG. 

For SQD linear systems with multiple right-hand sides, the proposed D-TriCG method uses the approximate elliptic singular vectors that were computed by TriCG-DR while solving the first right-hand side system to generate an initial guess, then applies TriCG (some reorthogonalization steps are used to control the rounding errors) to compute the solutions of the systems with subsequent right-hand sides. Numerical experiments on the unsteady incompressible Stokes equation demonstrate significant convergence acceleration of the proposed TriCG-DR+D-TriCG method.

TriMR \cite{montoison2021tricg} is another method for solving SQD linear systems based on the minimal residual (MR) condition. Existing deflation techniques for MR-type Krylov subspace methods (see, e.g., \cite{baglama2013augmented,daas2021recycling,morgan2002gmres,abdelrehim2010deflated}) typically augment Krylov subspaces with harmonic Ritz vectors and treat the coefficient matrix as a whole. 
Our future research will focus on developing a novel deflation technique tailored for TriMR, which leverages the block two-by-two structure of \eqref{eq:problem} to enhance convergence.

\section*{Declarations}
\subsection*{Funding} 
This work was supported by the National Natural Science Foundation of China
(Nos. 12171403 and 11771364), and the Fujian Provincial Natural Science
Foundation of China (No. 2025J01031).
\subsection*{Conflict of Interest} 
The authors have no competing interests to declare that are relevant to the
content of this article.	
\subsection*{Data Availability} 
The MATLAB scripts to reproduce the results in this section are available at https://github.com/kuidu/tricgdr.
\subsection*{Author Contributions}
Both authors have contributed equally to the work.


\end{document}